\newtheorem{athm}{\bf \t}[section]
\newenvironment{thm} [1] {\def\t{#1}\begin{athm} \bf \rm} {\end{athm}}
\newcommand{\bthm}{\begin{thm}}\newcommand{\ethm}{\end{thm}}
\newtheorem{lemma}{Lemma}[section]
\newtheorem{remark}{Remark}[section]
\newtheorem{proposition}{Proposition}[section]
\newcommand{\beq}{\begin{equation}}
\newcommand{\eeq}{\end{equation}}
\newcommand{\ben}{\begin{eqnarray}}
\newcommand{\een}{\end{eqnarray}}
\newcommand{\beno}{\begin{eqnarray*}}
\newcommand{\eeno}{\end{eqnarray*}}
\newcommand{\bali}{\begin{aligned}}
\newcommand{\eali}{\end{aligned}}
\numberwithin{equation}{section}
\newcommand{\al}{\alpha}
\newcommand{\ve}{\varepsilon}
\newcommand{\ga}{\gamma}
\newcommand{\f}{\frac}
\newcommand{\na}{\nabla}
\newcommand{\ud}{\mathrm{d}}
\newcommand{\ue}{\mathrm{e}}
\newcommand{\uu}{\mathbf{u}}
\newcommand{\vv}{\mathbf{v}}
\newcommand{\xx}{\mathbf{x}}
\newcommand{\yy}{\mathbf{y}}
\newcommand{\nn}{\mathbf{n}}
\newcommand{\hh}{\mathbf{h}}
\newcommand{\mm}{\mathbf{m}}
\newcommand{\MM}{\mathbf{M}}
\newcommand{\NN}{\mathbf{N}}
\newcommand{\DD}{\mathbf{D}}
\newcommand{\FF}{\mathbf{F}}
\newcommand{\II}{\mathbf{I}}
\newcommand{\CR}{\mathcal{R}}
\newcommand{\CF}{\mathcal{F}}
\newcommand{\CQ}{\mathcal{Q}}
\newcommand{\CL}{\mathcal{L}}
\newcommand{\CM}{\mathcal{M}}
\newcommand{\CK}{\mathcal{K}}
\newcommand{\CJ}{\mathcal{J}}
\newcommand{\FS}{\mathfrak{S}}
\newcommand{\BS}{{\mathbb{S}^2}}
\newcommand{\BR}{{\mathbb{R}^3}}
\newcommand{\BOm}{\mathbf{\Omega}}
\newcommand{\Ae}{{A_{\varepsilon}}}
\newcommand{\Ue}{{U_{\varepsilon}}}
\newcommand{\mue}{{\mu_{\varepsilon}}}
\newcommand{\Be}{{B_{\varepsilon}}}
\newcommand{\sqe}{{\sqrt{\varepsilon}}}
\newcommand{\pa}{\partial}
\begin{document}

\title{From microscopic theory to macroscopic theory: dynamics of the rod-like liquid crystal molecules}

\author{Wei Wang$^\dag$,\, Pingwen Zhang $^\ddag$\,and \,Zhifei Zhang$^\ddag$\\[2mm]
{\small $^\dag$ Beijing International Center for Mathematical Research, Peking University, P. R. China}\\
{\small $^\ddag$ LMAM \& School of  Mathematical Sciences, Peking University, P. R. China}\\
{\small E-mail: wangw07@pku.edu.cn, pzhang@pku.edu.cn,
zfzhang@math.pku.edu.cn}}

\maketitle

\begin{abstract}
Starting from Doi-Onsager equation for the liquid crystal, we first derive the Q-tensor equation by the Bingham closure.
Then we derive the Ericksen-Leslie equation from the Q-tensor equation by taking the small Deborah number limit.
\end{abstract}

\section{Introduction}

Liquid crystals are a state of matter that have properties between
those of a conventional liquid and those of a solid crystal. One of
the most common liquid crystal phases is the nematic. The nematic liquid crystals are composed of rod-like molecules with the long
axes of neighboring molecules approximately aligned to one another. There are three different kinds of theories to model the nematic liquid crystals.

\subsection{Doi-Onsager theory}

The state of alignment of the nematic liquid crystal molecules(LCP) is described by the orientational distribution function.
A classic model that predicts isotropic-nematic phase transition is
the hard-rod model proposed by Onsager \cite{On}, in which the rod-rod interaction is modeled by the
excluded volume effect. Maier and Saupe \cite{MS} following Onsager
proposed a slightly modified interaction potential, now known as the
Maier-Saupe potential. Doi and Edwards \cite{Doi} extended Onsager's
theory in order to describe the behavior of liquid crystal polymer flows.

We use $\xx\in \BR$ to denote the material point and
$f(\xx,\mm,t)$ to represent the number density for the number of
molecules whose orientation is parallel to $\mm$ at point $\xx$ and
time $t$. For the spatially homogeneous liquid crystal flow,
the Doi-Onsager equation \cite{Doi} takes
\begin{eqnarray}\label{eq:Doi-Onsager}
\frac{\partial{f}}{\partial{t}}=\f 1 {De}\CR\cdot(\CR{f}+f\CR
U)-\CR\cdot\big(\mm\times\kappa\cdot\mm{f}\big),
\end{eqnarray}
where $De$ is the Deborah number, $\CR=\mm\times\nabla_\mm$ is the rotational gradient
operator, $\kappa$ is a constant velocity gradient, and $U$ is the
mean-field interaction potential. This model
has a free energy
\begin{equation}\label{eq:free energy-homo}
A[f]=\int_{\BS}\big(f(\mm,t)\ln{f(\mm,t)}+\frac12f(\mm,t)U(\mm,t)\big)\ud\mm
\end{equation}
as its Lyapunov functional.

The homogeneous Doi-Onsager equation has been very successful in describing the
properties of liquid crystal polymers in a solvent. This model takes
into account the effects of hydrodynamic flow, Brownian motion, and
intermolecular forces on the molecular orientation distribution.
However, it does not include effects such as distortional
elasticity. Therefore, it is valid only in the limit of spatially
homogeneous flows.

Inhomogeneous flows were first studied by Marrucci and Greco
\cite{MG}, and subsequently by many people \cite{FLS, Wang}. Instead
of using the distribution as the sole order parameter, they used a
combination of the tensorial order parameter and the distribution
function, and used the spatial gradients of the tensorial order
parameter to describe the spatial variations. This is a departure
from the original motivation that led to the kinetic theory.
Wang, E, Liu and Zhang \cite{WELZ} set up a formalism in which the
interaction between molecules is treated more directly using the
position-orientation distribution function via interaction
potentials. They extended the free energy (\ref{eq:free energy-homo})
to include the effects of nonlocal intermolecular interactions
through an interaction potential as follows:
\begin{align}
&\Ae[f]=\int_{\BR}\int_{\BS}f(\xx,\mm,t)(\ln{f(\xx,\mm,t)}-1)
+\frac{1}{2}\Ue(\xx,\mm,t)f(\xx,\mm,t)\ud\mm\ud\xx,\\
&\Ue(\xx,\mm,t)=\int_{\BR}\int_{\BS}\Be(\xx,\xx',\mm,\mm')f(\xx',\mm',t)\ud\mm'\ud\xx'.
\end{align}
Here $\Be(\xx,\xx';\mm,\mm')$ is the interaction kernel.
There are two typical choices:
\begin{enumerate}
\item Long range Maier-Saupe interaction potential:
$$\Be(\xx,\xx',\mm,\mm')=\frac{1}{\ve^{3/2}}g(\frac{\xx-\xx'}{\sqrt{\ve}})\al|\mm\times\mm'|^2,$$
where $g(\xx)\in C^{\infty}(\BR)$ is a radial function with $\int_\BR g(\xx)\ud{\xx}=1$,
and the small parameter $\sqe$ represents the typical interaction distance.

\item Hard-core interaction potential:
\begin{align*}
\Be(\xx,\xx',\mm,\mm')=
\left\{
\begin{array}{ll}
0, \quad{\small \text{molecule} (\mathbf{x},\mathbf{m}) \text{ is disjoint with
molecule } (\mathbf{x}',\mathbf{m}')},\\
1, \quad\text{\small joint with each other}.
\end{array}
\right.
\end{align*}
\end{enumerate}
In this paper, we first focus on the Maier-Saupe case. In the last section,
we will discuss the hard-core potential case.

The chemical potential $\mue$ is defined as
$$\mue=\frac{\delta{\Ae[f]}}{\delta{f}}=\ln{f(\xx,\mm,t)+\Ue(\xx,\mm,t)}.$$
We also introduce non-interacting kernel
$$B_0(\xx,\xx',\mm,\mm')=\alpha|\mm\times\mm'|^2\delta(\xx-\xx').$$
We denote by $U_0,~\mu_0,~A_0[f]$ the corresponding non-interacting potential, chemical potential and free energy respectively.
Throughout this paper, we use the notation
$$\langle(\cdot)\rangle_f=\int_{\BS}(\cdot) f(\xx,\mm,t)\ud{\mm}.$$

The non-dimensional Doi-Onsager equation takes the form:
\begin{align}
\frac{\pa{f}}{\pa{t}}+\vv\cdot\nabla{f}=&\frac{\ve}{De}\nabla\cdot\big\{
\big(\gamma_{\|}\mm\mm+\gamma_{\bot}(\II-\mm\mm)\big)\cdot(\nabla{f}+f\nabla{\Ue})\big\}\nonumber\\
&+\frac{1}{De}\CR\cdot(\CR{f}+f\CR{\Ue})
-\CR\cdot(\mm\times\kappa\cdot\mm{f}),\label{Doi-v}\\\nonumber
\frac{\pa{\vv}}{\pa{t}}+\vv\cdot\nabla\vv=&-\nabla{p}+\frac{\gamma}{Re}\Delta\vv
+\frac{1-\gamma}{2Re}\nabla\cdot(\DD:\langle\mm\mm\mm\mm\rangle_f)+\frac{1-\gamma}{De
Re}(\nabla\cdot\tau^e+\FF^e),\\
\nabla\cdot\vv=&0,\nonumber
\end{align}
where $\kappa=(\nabla\vv)^T,~\DD=\frac{1}{2}(\kappa+\kappa^T)$, while $\tau^e$ and $\FF^e$ represent the elastic stress and body force
respectively defined as
\begin{eqnarray}
\tau^e=-\langle\mm\mm\times\CR\mue\rangle_f=(3\langle\mm\mm-\frac{1}{3}\II\rangle_f
-\langle\mm\mm\times\CR{\Ue}\rangle_f),\quad
\FF^e=-\langle\nabla\mue\rangle_f.
\end{eqnarray}
The constants $\ga_{\|}$ and $\ga_{\bot}$  denote the translational diffusion coefficients parallel to and
normal to the orientation of the LCP molecule respectively. In the case when $\ga_{\|}=\ga_{\bot}=0$,
we may assume that $\int_{\BS}f(\xx,\mm)\ud\mm=1,$ which means that the density of the molecular is constant.

The Doi-Onsager equation (\ref{Doi-v}) has the following the energy dissipation relation:
\begin{eqnarray}
&&-\frac{\ud}{\ud{t}}\Big[\int_{\BR}\frac{1}{2}|\vv|^2\ud\xx+\frac{1-\gamma}{DeRe}A_\ve[f]\Big]\nonumber
\\&&=\int_{\BR}\Big(\frac{\gamma}{Re}\DD:\DD+\frac{1-\gamma}{2Re}\langle(\mm\mm:\DD)^2\rangle_f
+\frac{1-\gamma}{De^2Re}\langle\CR\mu\cdot\CR\mu\rangle_f\nonumber\\
&&\quad+\frac{\ve}{De^2Re}\langle\nabla\mu\cdot(\gamma_{\|}\mm\mm+\gamma_\bot(\II-\mm\mm))\cdot\nabla\mu\rangle_f
\ud\Big)\ud\xx.
\end{eqnarray}

\subsection{Landau-de Gennes theory(Q-tensor theory)}

One of continuum theory for the nematic liquid crystals is the Landau-de Gennes theory \cite{DG}.
In this theory, the sate of the nematic liquid crystals is described by the macroscopic Q-tensor order parameter,
which is a symmetric, traceless $3\times 3$ matrix. Physically, it can be interpreted as the second-order moment of
the orientational distribution function $f$, that is,
\beno
Q=\int_\BS(\mm\mm-\frac{1}{3}\II)f\ud\mm.
\eeno
When $Q=0$, the nematic liquid crystal is said to be isotropic. When $Q$ has two equal non-zero eigenvalues, it is said to be unixial. In such case,
$Q$ can be written as
\beno
Q=s\big(\nn\nn-\f13\II\big),\quad \nn\in \BS.
\eeno
When $Q$ has three distinct eigenvalues, it is said to be biaxial. In such case, $Q$ can be written as
\beno
Q=s\big(\nn\nn-\f13\II\big)+\lambda(\nn'\nn'-\frac13\II),\quad \nn,\nn'\in \BS,\quad \nn\cdot\nn'=0.
\eeno

The Landau-de Gennes energy functional $E_{LG}$ is given by
\ben
E_{LG}=\int_{\BR}w(Q,\na Q)+f_{bulk}(Q)\ud \xx,
\een
where $w$ is the elastic energy, which in general takes
\ben
w(Q,\nabla Q)=L_1|\nabla Q|^2+L_2Q_{ik,j}Q_{ij,k}+L_3Q_{ij,j}Q_{ik,k}+L_4Q_{kl}Q_{ij,k}Q_{ij,l},
\een
where $Q=(Q_{ij})$, $Q_{ij,k}=\frac{\partial Q_{ij}}{\partial x_k}$ and $L_1,\cdots,L_4$ are elastic constants;
$f_{bulk}$ is the bulk energy which in the simplest form takes
\ben
f_{bulk}(Q)=a\mathrm{tr}(Q^2)+\frac{2b}{3}\mathrm{tr}(Q^3)+\frac{c}{2}\big(\mathrm{tr}(Q^2)\big)^2,
\een
where $a, b, c$ are temperature dependent constants. We refer to \cite{MN} for more details.

There are two popular dynamical $Q$-tensor models for liquid crystals, which are derived
by Beris-Edwards \cite{BE} and Qian-Sheng \cite{QS} separately.
When the total energy in $Q$-tensor form
is given by $E(Q,\nabla Q)$, we define
$$\mu_{Q}=\frac{\delta{E(Q,\nabla Q)}}{\delta Q}.$$
The dynamical $Q$-tensor theory could written in
the following form in general:
\begin{align}
\frac{\pa{{Q}}}{\pa{t}}+\vv\cdot\nabla{Q}&=D^{rot}(\mu_Q)+F(Q,\DD)
+\BOm\cdot\mu_Q-\mu_Q\cdot\BOm,\nonumber\\
\frac{\pa{\vv}}{\pa{t}}+\vv\cdot\nabla\vv&=-\nabla{p}+\nabla\cdot\big(\sigma^{dis}+\sigma^{s}+\sigma^a+\sigma^{d}\big),
\label{eq:Q-general-intro}\\\nonumber
\nabla\cdot\vv=0,
\end{align}
where $D^{rot}(\mu_Q)$ is the rotational diffusion term,
$F(Q,\DD)$ is the velocity-induced term, and $\sigma^d$ is the distortion stress,
$\sigma^a$ is the anti-symmetric part of orientational-induced stress,
$\sigma^{s}=\gamma F(Q, \mu_Q)$ is the symmetric stress induced by the orientational,
which conjugates to $F(Q, \DD)$ ($\gamma$ is a constant), $\sigma^{dis}$
is the additional dissipation stress.

In  Beris-Edwards's model and Qian-Shen's model, module some constants, $\sigma^a$ and $\sigma^d$ are the same, i.e.,
\begin{align}
\sigma_{ij}^d=\frac{\partial E}{\partial (Q_{kl,j})}Q_{kl,i},\quad
\sigma^a=Q\cdot\mu_Q-\mu_Q\cdot Q.
\end{align}
In Beris- Edwards's model, the other terms are given by
\begin{align*}
&D^{rot}_{BE}=-\Gamma \mu_Q,\quad \sigma_{BE}^{dis}=0,\quad \sigma_{BE}^s=F_{BE}(Q,\mu_Q),\\
&F_{BE}(Q,A)=\xi\Big((Q+\frac13Id)\cdot A+A\cdot(Q+\frac13Id)-2(Q+\frac13Id)(A:Q)\Big).
\end{align*}
In Qian-Sheng's model, they are given by
\begin{align*}
&D^{rot}_{QS}=-\Gamma \mu_Q,\quad
\sigma_{QS}^s=-\frac12\frac{\mu_2^2}{\mu_1}\mu_{Q},\quad
F_{QS}(Q,\DD)=-\frac12\frac{\mu_2}{\mu_1}\DD,\\
&\sigma^{dis}_{QS}=\beta_1 Q(Q:A)+\beta_2 \DD+ \beta_3(Q\cdot\DD+\DD\cdot Q).
\end{align*}

There are also other dynamic models by using $Q$ tensor to describe the flow of the nematic liquid crystals, which are obtained
by various closure approximations or the variational principle. We refer to \cite{FLS, Feng, SMV} and references therein.

\subsection{Ericksen-Lesilie theory}

The Ericksen-Leslie theory \cite{Eri61, Eri,Les} is an elastic continuum
theory, which is a very powerful tool for modeling liquid crystal devices.
This theory treats the liquid crystal material as a continuum and
completely ignores molecular details. Moreover, this theory considers
perturbations to a presumed oriented sample.

In this theory, the configuration of the liquid crystals is described by a director
field $\nn(\xx, t)$. The Ericksen-Leslie equation takes the form
\begin{eqnarray}\label{eq:EL}
\left\{
\begin{split}
&\vv_t+\vv\cdot\nabla\vv=-\nabla{p}+\frac{\gamma}{Re}\Delta\vv
+\frac{1-\gamma}{Re}\nabla\cdot\sigma,\\
&\na\cdot\vv=0,\\
&\nn\times\big(\hh-\gamma_1\NN-\gamma_2\DD\cdot\nn\big)=0,
\end{split}\right.
\end{eqnarray}
where $\vv$ is the velocity of the fluid, $p$ is the pressure, $Re$ is  the Reynolds number and $\gamma\in (0,1)$.
The stress $\sigma$ is modeled by the phenomenological constitutive relation
\beno
\sigma=\sigma^L+\sigma^E,
\eeno
where $\sigma^L$ is the viscous (Leslie) stress
\begin{eqnarray}\label{eq:Leslie stress}
\sigma^L=\alpha_1(\nn\nn:\DD)\nn\nn+\alpha_2\nn\NN+\alpha_3\NN\nn+\alpha_4\DD
+\alpha_5\nn\nn\cdot\DD+\alpha_6\DD\cdot\nn\nn \een
with $\DD=\frac{1}{2}(\kappa^T+\kappa), \kappa=(\na \vv)^T$, and
\beno
\NN=\nn_t+\vv\cdot\nabla\nn+\BOm\cdot\nn,\quad\BOm=\frac{1}{2}(\kappa^T-\kappa).
\eeno
The six constants $\al_1, \cdots, \al_6$ are called the Leslie coefficients.  While, $\sigma^E$ is the elastic (Ericksen) stress
\begin{eqnarray}\label{eq:Ericksen}
\sigma^E=-\frac{\partial{E_F}}{\partial(\nabla\nn)}\cdot(\nabla\nn)^T,
\end{eqnarray}
where $E_F=E_F(\nn,\nabla\nn)$ is the Oseen-Frank energy with the form
\beno
E_F=\f {k_1} 2(\na\cdot\nn)^2+\f {k_2} 2|\nn\cdot(\na\times\nn)|^2+\f {k_3} 2|\nn\times(\na\times \nn)|^2
+\frac12{(k_2+k_4)}\big(\textrm{tr}(\na\nn)^2-(\na\cdot\nn)^2\big).
\eeno
Here $k_1, k_2, k_3, k_4$ are the elastic constant.
Especailly, in the case when $k_1=k_2=k_3=1$ and $k_4=0$, we have $E_F=\f12|\na\nn|^2$, and the molecular field $\hh$ is given by
\beno
&&\hh=-\frac{\delta{E_F}}{\delta{\nn}}=
\nabla\cdot\frac{\partial{E_F}}{\partial(\nabla\nn)}-\frac{\partial{E_F}}{\partial\nn}=-\Delta\nn,\\
&&\big(\sigma^E\big)_{ij}=-\big(\nabla\nn\odot\nabla\nn\big)_{ij}=-\partial_in_k\partial_jn_k.
\eeno
The Leslie coefficients and $\gamma_1, \gamma_2$ satisfy the following relations
\ben
&\alpha_2+\alpha_3=\alpha_6-\alpha_5,\label{Leslie relation}\\
&\gamma_1=\alpha_3-\alpha_2,\quad \gamma_2=\alpha_6-\alpha_5,\label{Leslie-coeff}
\een
where (\ref{Leslie relation}) is called Parodi's relation derived from the Onsager reciprocal relation \cite{Parodi}. These two relations
ensure that the system has a basic energy dissipation law:
\begin{align}
-\frac{\ud}{\ud{t}}\Big(\int_{\BR}\frac{Re}{2(1-\gamma)}|\vv|^2\ud\xx+E_F\Big)
=&\int_{\BR}\Big(\frac{\gamma}{1-\gamma}|\nabla\vv|^2+(\alpha_1+\frac{\gamma_2^2}{\gamma_1})(\DD:\nn\nn)^2
+\alpha_4\DD:\DD\qquad\nonumber\\
&\quad+(\alpha_5+\alpha_6-\frac{\gamma_2^2}{\gamma_1})|\DD\cdot\nn|^2
+\frac{1}{\gamma_1}|\nn\times\hh|^2\Big)\ud\xx.\quad\label{EL_energy_law}
\end{align}

\subsection{From the molecular kinetic theory to the continuum theory}

Two kinds of theories were put forward to investigate the liquid crystalline polymers from the different points of view.
The Q-tensor theory and Ericksen-Leslie theory are phenomenological.  Especially,  there are some unknown parameters in the continuum theory,
which are difficult to determine by using experimental results.
In the spirit of Hilbert sixth problem, it is very important to explore the relationship between these two theories.

Our goal is to derive two commonly used continuum theories: Q-tensor theory and Ericksen-Leslie theory starting from Doi-Onsager theory.
In \cite{KD, EZ}, Kuzzu-Doi and E-Zhang formally derive the Ericksen-Leslie
equation from the Doi-Onsager equations by taking small Deborah number limit.
We justify their formal derivation in our recent work \cite{WZZ}. An natural
question is whether one can derive the Ericksen-Leslie model from the Q-tensor
models and derive the Q-tensor model from the Doi-Onsager equations. In the
static case, similar questions have been studied by Ball-Majumdar \cite{BM},
Majumdar-Zarnescu \cite{MZ} and the second paper in this series \cite{HLWZ}.

In this paper,  we first derive a new Q-tensor model by the Bingham closure, then we derive the Ericksen-Leslie equation from the derived Q-tensor model (see Fig. 1).
The existing Q-tensor models are usually derived by various closure approximations, such as the Doi's
quadratic closure \cite{Doi}, the HL closures \cite{HL}, the orthotropic closure \cite{CL} and the Bingham closure
\cite{CT}. Feng et al. \cite{Feng} provided detailed numerical comparisons among five commonly
used closures and found that the Bingham closure gives better results than others. Moreover, in these closure methods,
the Bingham closure seems to be the only one which persists the energy dissipation law.

\begin{figure}
\centering
  \includegraphics[width=13cm]{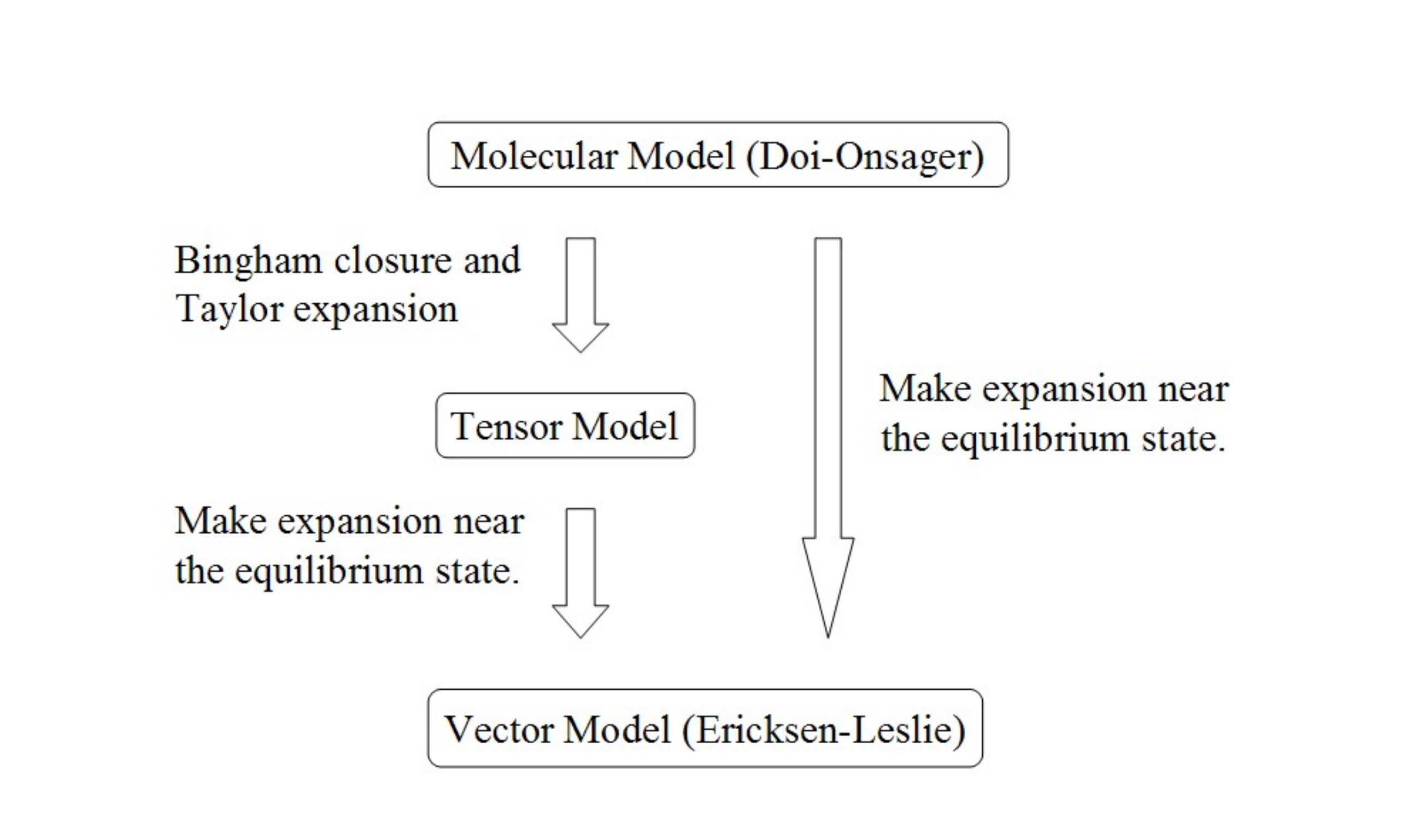}
  \caption{The systematic approach to connect the liquid crystal models of different levels.}
\end{figure}

\section{From Doi-Onsager equation to Q-tensor equation}

\subsection{Bingham closure and Q-tensor equation}

The Bingham closure is a kind of quasi-equilibrium closure approximation.
The idea is to calculate $\langle\mm\mm\mm\mm\rangle_f$ by using
the quasi-equilibrium $f_Q$ given by
\begin{align}
f_Q=\frac{\exp(\mm\mm:B_Q)}{\int_\BS\exp(\mm\mm:B_Q)\ud\mm},\quad\quad
Q=\int_\BS(\mm\mm-\frac{1}{3}\II)f\ud\mm.
\end{align}
Given a symmetric and trace free matrix $Q$, the symmetric traceless matrix $B_Q$ is determined by the following relation:
\begin{align}
Q=\frac{\int_\BS(\mm\mm-\frac{1}{3}\II)\exp({\mm\mm:B_Q})\ud\mm}{\int_\BS\exp({\mm\mm:B_Q})\ud\mm}.
\end{align}
\begin{remark}
If the eigenvalues $\lambda_i(i=1,2,3)$ of $Q$ satisfy the constraint:
\beno
-\f13<\lambda_i<\f23\quad i=1,2,3,
\eeno
then $B_Q$ is uniquely determined by $Q$(see \cite{BM}).
\end{remark}

We denote
\beno
&&Z_Q=\int_\BS\exp({\mm\mm:B_Q})\ud\mm,\quad M^{(4)}_Q=\int_\BS\mm\mm\mm\mm{f}_Q\ud\mm,\\
&&M^{(6)}_Q=\int_\BS\mm\mm\mm\mm\mm\mm{f}_Q\ud\mm,\quad Q^\ve=\int_\BR g_\ve(\xx-\xx')Q(\xx')\ud\xx'.
\eeno
It is easy to compute that
\begin{align*}
U_\ve{f}&=-\alpha\int_{\BR}\int_\BS(\mm\cdot\mm')^2f(\xx',\mm')g_\ve(\xx-\xx')\ud\xx'\ud\mm\\
&=-\alpha(\mm\mm-\frac{1}{3}\II):Q^\ve,\\
\mu_\ve{f}&=\mm\mm:B_Q-\ln{Z_Q}-\alpha(\mm\mm-\frac{1}{3}\II):Q^\ve,\\
\CR\mu_\ve{f}&=\mm\times\big((B_Q-\alpha{Q}^\ve)\cdot\mm\big).
\end{align*}
Introduce the operator
\begin{align*}
\mathcal{M}_{Q}(A)&=\frac13A+Q\cdot A-A:M_Q^{(4)},\\
\mathcal{N}_{Q}(A)_{\alpha\beta}&=\partial_i\bigg\{\Big[\gamma_\bot\Big(M^{(4)}_{Q\alpha\beta kl}\delta_{ij}
-\frac{\delta_{\alpha\beta}}{3}Q_{kl}\delta_{ij}\Big)
+(\gamma_{\|}-\gamma_\bot)\big(M^{6}_{\alpha\beta klij}
-\frac{\delta_{\alpha\beta}}{3}M^{(4)}_{klij}\big)\Big]\partial_jA_{kl}\bigg\}.
\end{align*}
Then the Doi-Onsager equation (\ref{Doi-v}) is transformed to a system in terms of $(Q,\vv)$:
\begin{align}
\frac{\pa{{Q}}}{\pa{t}}+\vv\cdot\nabla{Q}&=\frac{\ve}{De}\mathcal{N}_Q(B_{Q}-\alpha Q^\ve)+\frac{1}{De}
\Big(-6Q+2\alpha\big[\CM_Q(Q^\ve)+\CM_Q^T(Q^\ve)\big]\Big)\nonumber\\
&\quad+\big(\CM_Q(\kappa^T)+\CM_Q^T(\kappa^T)\big),\label{eq:Q-I}\\\nonumber
\frac{\pa{\vv}}{\pa{t}}+\vv\cdot\nabla\vv&=-\nabla{p}
+\frac{\gamma}{Re}\Delta\vv+\frac{1-\gamma}{2Re}\nabla\cdot(\DD:M_Q^{(4)})\\
&\quad+\frac{1-\gamma}{DeRe}\Big(-\nabla\cdot\big[-3Q+2\alpha\CM_Q(Q^\ve)\big]+\alpha{Q}:\nabla{Q}^\ve\Big).\label{eq:v-I}
\end{align}
Since the typical interaction distance $\sqrt{\ve}$ is very small, we make the following Taylor expansion for $Q^\ve$:
\begin{align*}
Q^\ve(\xx)&=Q(\xx)+\ve\frac{G}{2}\Delta{Q}(\xx)+O(\ve^2),
\end{align*}
where the constant $G=\frac13\int_{\BR}g(\yy)|\yy|^2\ud\yy.$

Replacing $Q^\ve$ by $Q(x)+\ve\frac{G}{2}\Delta{Q}(x)$ in (\ref{eq:Q-I})-(\ref{eq:v-I}), we derive the following Q-tensor equation
from the Doi-Onsager equation:
\begin{align}
\frac{\pa{{Q}}}{\pa{t}}+\vv\cdot\nabla{Q}&=\frac{\ve}{De}\mathcal{N}(B_Q-\alpha Q
-\frac{G}{2}\alpha\ve\Delta Q)+\big(\CM_Q(\kappa^T)+\CM_Q^T(\kappa^T)\big)\nonumber\\
&\quad+\frac{1}{De}\Big(-6Q+2\alpha\big[\CM_Q(Q+\frac{G}{2}\ve\Delta{Q})
+\CM_Q^T(Q+\frac{G}{2}\ve\Delta{Q})\big]\Big),\label{eq:Q-D}\\\nonumber
\frac{\pa{\vv}}{\pa{t}}+\vv\cdot\nabla\vv&=-\nabla{p}
+\frac{\gamma}{Re}\Delta\vv+\frac{1-\gamma}{2Re}\nabla\cdot(\DD:M_Q^{(4)})\\
&\quad+\frac{1-\gamma}{DeRe}\Big(-\nabla\cdot\big[-3Q+2\alpha\CM_Q(Q+
\frac{G}{2}\ve\Delta{Q})\big]+\frac12G\alpha\ve{Q}:\nabla\Delta{Q}\Big).\label{eq:v-D}
\end{align}

\subsection{Energy dissipation law of Q-tensor equation}

In this subsection, we derive the energy dissipation law of Q-tensor equation (\ref{eq:Q-D})-(\ref{eq:v-D}).
We need the following lemmas.

\begin{lemma}\label{eq:ident-1}
For any symmetric and trace free matrix $Q$, there hold
\begin{align}
&\frac32Q=\CM_Q(B_Q)\equiv B_Q\cdot{Q}+\frac{1}{3}{B_Q}-B_Q:M_Q^{(4)},\nonumber\\
&B_Q\cdot{Q}=Q\cdot{B_Q}.\nonumber
\end{align}
\end{lemma}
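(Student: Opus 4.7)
The plan is to derive both identities from the explicit Gibbsian form $f_Q = Z_Q^{-1}\exp(\mm\mm:B_Q)$ of the Bingham density, treating them separately. The commutation is essentially a symmetry argument, while the main identity comes from integration by parts on $\BS$.

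For the commutation $B_Q\cdot Q = Q\cdot B_Q$, I would diagonalize the symmetric matrix $B_Q$ in an orthonormal eigenframe $\{\ee_1,\ee_2,\ee_3\}$. Then $f_Q$ becomes a function of the three squares $(\mm\cdot\ee_i)^2$ alone, hence is invariant under each coordinate reflection $R_i:\mm\mapsto \mm-2(\mm\cdot\ee_i)\ee_i$. Since each $R_i$ is an isometry of $\BS$, the moment $Q$ inherits this invariance; and for $j\ne k$, the entry $\ee_j^T Q\ee_k$ is odd under $R_j$ and therefore vanishes. Thus $Q$ is diagonal in the eigenframe of $B_Q$, and the two matrices commute.

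For the main identity $\frac32 Q = \CM_Q(B_Q)$, I would exploit the skew-adjointness of the rotational gradient $\CR_l = (\mm\times\nabla_\mm)_l$ on $L^2(\BS)$, applied to the test function $g(\mm)=\mm_j\mm_k$ against $f_Q$:
\beno
\int_\BS \CR_l(\mm_j\mm_k)\,f_Q\ud\mm = -\int_\BS \mm_j\mm_k\,\CR_l f_Q\ud\mm.
\eeno
A direct computation turns the left side into $\ep_{laj}Q_{ak}+\ep_{lak}Q_{aj}$ (the $\II/3$ parts cancel by the antisymmetry of $\ep$), while on the right, using $\ln f_Q = \mm\mm:B_Q - \ln Z_Q$ and hence $\CR_l f_Q = 2f_Q\,\ep_{lab}\mm_a B_{bc}\mm_c$, one obtains a contraction of $\ep\cdot B_Q$ with $M^{(4)}_Q$. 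Peeling off the Levi--Civita symbol by contracting with $\ep_{lmn}$ and then tracing $m=j$, and using the reduction $M^{(4)}_{Q,mbmd}=Q_{bd}+\frac13\delta_{bd}$ (which comes from $|\mm|=1$) together with the symmetry of $B_Q$, the identity reassembles into $\frac32 Q = \frac13 B_Q + Q\cdot B_Q - B_Q:M^{(4)}_Q = \CM_Q(B_Q)$; the form $B_Q\cdot Q$ stated in the lemma is interchangeable with $Q\cdot B_Q$ by the commutation.

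The main obstacle is the $\ep$-symbol bookkeeping in the second step: one must track two Levi--Civita contractions carefully and invoke the trace reduction at the right moment in order to see clean matrix products emerge from the fourth-moment contractions. Beyond this, no further analytic ideas are required — the entire lemma is a consequence of integration by parts on $\BS$ and the Gibbsian form of $f_Q$.
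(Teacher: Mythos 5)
Your proof is correct, and the core of it --- the identity $\tfrac32Q=\CM_Q(B_Q)$ --- is obtained by essentially the same mechanism as in the paper: the vanishing of $\int_{\BS}\CR\cdot(\,\cdot\,)\,\ud\mm$, i.e.\ integration by parts for the rotational gradient, applied to quadratic functions of $\mm$ against $f_Q$, together with $\CR f_Q=2f_Q\,\mm\times(B_Q\cdot\mm)$ and the trace reduction $M^{(4)}_{Q,jcjk}=Q_{ck}+\tfrac13\delta_{ck}$. The only organizational difference is that you work componentwise with two Levi--Civita contractions, whereas the paper contracts once against the vector field $\mm\times(\kappa\cdot\mm)$ for an arbitrary constant matrix $\kappa$ and reads off the matrix identity from the arbitrariness of $\kappa$; the latter avoids the $\epsilon$-bookkeeping you flag as the main obstacle, but the computations are the same in substance. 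Where you genuinely diverge is the commutation $B_Q\cdot Q=Q\cdot B_Q$: the paper gets it for free as a corollary of the first identity, observing that $\tfrac32Q$, $\tfrac13B_Q$ and $B_Q:M^{(4)}_Q$ are all symmetric, so $B_Q\cdot Q$ must equal its own transpose $Q\cdot B_Q$. Your reflection-symmetry argument (diagonalize $B_Q$, note $f_Q$ is even in each eigencoordinate, conclude the off-diagonal second moments vanish) is an independent and perfectly valid alternative; it costs a little more work but yields the slightly stronger structural fact that $Q$ and $B_Q$ are simultaneously diagonalizable, and it does not rely on having the first identity in hand. Both routes are sound.
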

\begin{proof} For any constant matrix $\kappa$, we have
\begin{align*}
0&=\int\Big(\mm\times(\kappa\cdot\mm)\cdot\CR{f_Q}+
\CR\cdot\big(\mm\times(\kappa\cdot\mm)\big)f_Q\Big)\ud\mm\nonumber\\
&=\int\Big(2\big(\mm\times(\kappa\cdot\mm)\big)\cdot\big(\mm\times({B_Q}\cdot\mm)\big){f_Q}+
(\II-3\mm\mm):\kappa f_Q\Big)\ud\mm\\
&=\int\bigg\{2(\kappa\cdot\mm)(B_Q\cdot\mm)f_Q-2(\kappa:\mm\mm)(B_Q:\mm\mm)f_Q+
(\II-3\mm\mm):\kappa f_Q\bigg\}\ud\mm\\
&=\kappa:\big(2B_Q\cdot\MM_Q-2B_Q:M_Q^{(4)}-3Q\big),
\end{align*}
where $\MM_Q=\int_\BS\mm\mm f_Q\ud\mm=Q+\f13 I$,
which implies the first equality. The second equality is a direct consequence of the first equality by noting that $Q, B_Q$ are symmetric.
\end{proof}

\begin{lemma}\label{lem:ident-2}
For any matrix $E$, there holds
\begin{align}
M_Q(E):E&\equiv E:(Q+\frac{1}{3}\II)\cdot{E}-E:M_Q^{(4)}:E=\int_\BS|\mm\times(E\cdot\mm)|^2f_Q\ud\mm\ge0.\nonumber\\
\int\mathcal{N}(E):E\ud{\xx}&=-\int\big[\gamma_{\|}\mm\mm+\gamma_{\bot}(\II-\mm\mm)\big]_{ij}f_Q
\partial_i\big[(\mm\mm-\frac\II3):E\big]\partial_j\big[(\mm\mm-\frac\II3):E\big]\ud{x}\le0.\nonumber
\end{align}
\end{lemma}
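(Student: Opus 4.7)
For the first identity, the plan is to invoke the elementary vector identity $|\mathbf{a}\times\mathbf{b}|^2 = |\mathbf{a}|^2|\mathbf{b}|^2 - (\mathbf{a}\cdot\mathbf{b})^2$ with $\mathbf{a}=\mm$ (a unit vector) and $\mathbf{b}=E\cdot\mm$, giving
\[
|\mm\times(E\cdot\mm)|^2 = |E\cdot\mm|^2 - (\mm\cdot E\cdot\mm)^2.
\]
Integrating against $f_Q\,\ud\mm$ and using $\int_\BS\mm\mm\,f_Q\,\ud\mm = Q+\tfrac13\II$ and the definition $M_Q^{(4)}=\int_\BS\mm\mm\mm\mm\,f_Q\,\ud\mm$, the first integral becomes $E:(Q+\tfrac13\II)\cdot E$ (reading the expression as $E_{ij}(Q+\tfrac13\II)_{jk}E_{ik}$, which matches the integral when $E$ is symmetric) and the second becomes $E:M_Q^{(4)}:E$. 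Nonnegativity is then automatic, since the integrand is pointwise nonnegative.

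For the second identity, I would first integrate by parts in $\xx$ to move the outer $\partial_i$ in $\mathcal{N}(E)_{\alpha\beta}$ onto $E_{\alpha\beta}$, picking up a minus sign. The problem is then reduced to verifying a pointwise-in-$\xx$ identity between two quadratic forms in $\nabla E$. On the right-hand side, I would expand
\[
[\gamma_{\|}\mm\mm+\gamma_{\bot}(\II-\mm\mm)]_{ij} = \gamma_\bot\delta_{ij}+(\gamma_\|-\gamma_\bot)m_im_j,
\]
expand the product $(m_\alpha m_\beta-\tfrac13\delta_{\alpha\beta})(m_k m_l-\tfrac13\delta_{kl})$, and integrate against $f_Q\,\ud\mm$, producing a tensor expression in $\MM_Q,\,M_Q^{(4)},\,M_Q^{(6)}$. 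The dominant pieces $\gamma_\bot M^{(4)}_{\alpha\beta kl}\delta_{ij}+(\gamma_\|-\gamma_\bot)M^{(6)}_{\alpha\beta klij}$ and the trace-corrections $-\tfrac13\delta_{\alpha\beta}Q_{kl}\delta_{ij}$, $-\tfrac13\delta_{\alpha\beta}M^{(4)}_{klij}$ then coincide with the coefficient tensor defining $\mathcal{N}$; any leftover terms carry a factor $\delta_{kl}\partial_i E_{kl}=\partial_i(\mathrm{tr}\,E)$ and hence vanish under the implicit hypothesis that $E$ is symmetric and traceless, as holds for the arguments $B_Q-\alpha Q^\ve$ arising later in the energy computation. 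The final inequality is manifest, since the bilinear form $\gamma_\| m_im_j+\gamma_\bot(\delta_{ij}-m_im_j)$ contracted against $\eta_i:=\partial_i[(\mm\mm-\tfrac13\II):E]$ is positive semidefinite (in an $\mm$-aligned frame it is $\mathrm{diag}(\gamma_\|,\gamma_\bot,\gamma_\bot)$) and $f_Q\ge 0$.

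The main obstacle will be the index bookkeeping in the second part: carefully tracking the six- and four-index moments after expanding the two quadratic forms, and confirming that the trace-type residuals drop out. The first identity, by contrast, is essentially a one-line consequence of the cross-product identity together with the definitions of $\MM_Q$ and $M_Q^{(4)}$.
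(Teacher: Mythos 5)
Your argument is correct, and in fact the paper states this lemma without giving any proof at all, so there is nothing to compare against; your route (the cross-product identity $|\mm\times(E\cdot\mm)|^2=|E\cdot\mm|^2-(\mm\mm:E)^2$ together with the definitions of $\MM_Q$ and $M_Q^{(4)}$ for the first line, and integration by parts in $\xx$ plus expansion of the $\mm$-moments for the second) is the natural one and almost certainly what the authors intended. You also correctly flag the two hypotheses the paper leaves implicit despite the phrase ``for any matrix $E$'': the middle expression $E:(Q+\frac13\II)\cdot E$ agrees with $\int_\BS|E\cdot\mm|^2f_Q\ud\mm$ only when $E$ is symmetric (otherwise one gets $(E^TE):\MM_Q$ versus $(EE^T):\MM_Q$), and the residual terms in the second identity, which are proportional to $\delta_{kl}\partial_jE_{kl}$ or $\delta_{\alpha\beta}\partial_iE_{\alpha\beta}$, vanish only for traceless $E$. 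Both conditions hold for every argument to which the lemma is applied ($\DD$, $B_Q-\alpha Q^\ve$, and symmetric traceless $B$ in the positivity of $\CJ_\nn$), so your proof is complete for the paper's purposes.
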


We define the energy functional
\begin{align}
E_1(Q)=\int_{\BR}\Big(-\ln{Z}_Q+Q:B_Q-
\frac{\alpha}{2}{Q}:{Q}^\ve\Big)\ud\xx.
\end{align}
The system (\ref{eq:Q-I})-(\ref{eq:v-I}) obeys the energy law
\begin{align}
&\frac{\ud}{\ud{t}}\bigg\{\int_{\BR}\frac{1}{2}|\vv|^2\ud{x}+\frac{1-\gamma}{ReDe}E_1(Q)\bigg\}\nonumber\\
&=-\int_{\BR}\bigg\{\frac{\gamma}{Re}|\DD|^2+\frac{1-\gamma}{2Re}\DD:M_Q^{(4)}:\DD -(B_Q-\alpha Q^\ve):\mathcal{N}(B_Q-\alpha Q^\ve)\nonumber\\
&\qquad+\frac{4(1-\gamma)}{ReDe^2}\big(B_Q-\alpha{Q}^\ve\big):\CM_Q\big(B_Q-\alpha{Q}^\ve\big)\bigg\}\ud{x}.\label{eq:energy-1}
\end{align}
An important property is that the energy law (\ref{eq:energy-1}) is dissipated by Lemma \ref{lem:ident-2}.

Now let us prove (\ref{eq:energy-1}). It is easy to see that
\begin{align}
\CM_Q(A):B=\CM_Q(B):A,\label{eq:M_Q}
\end{align}
if $A$ or $B$ is symmetric. A direct computation tells us that
\begin{align}
\frac{\partial}{\partial Q} \Big(-\ln{Z}_Q+ Q:B_Q\Big)=B_Q.\label{eq:lnZ}
\end{align}
Then by using (\ref{eq:Q-I})-(\ref{eq:v-I}), we obtain
\begin{align*}
&\frac{\ud}{\ud{t}}E_1(Q)=\int_{\BR}\big(B_Q-\alpha{Q}^\ve\big):\partial_tQ\ud{x}\\
&=\int_{\BR}\frac{1}{De}\big(B_Q-\alpha{Q}^\ve\big):
\Big(-6Q+2\alpha\big[\CM_{Q}(Q^\ve)+\CM_{Q}^T(Q^\ve)\big]\Big)+\frac{\ve}{De}(B_Q-\alpha Q^\ve):\mathcal{N}(B_Q-\alpha Q^\ve)\\
&\qquad+\big(B_Q-\alpha{Q}^\ve\big):\Big(\CM_{Q}(\kappa^T)+\CM_{Q}^T(\kappa^T)-\vv\cdot\nabla{Q}\Big)\ud{x},\\
&\frac{\ud}{\ud{t}}\int_{\BR}\frac{1}{2}|\vv|^2\ud{x}=\int_{\BR}\Big\{-\frac{\gamma}{Re}|\DD|^2-\frac{1-\gamma}{2Re}\DD:M_Q^{(4)}:\DD\\
&\qquad\qquad+\frac{1-\gamma}{DeRe}\Big(\big[-3Q+2\alpha\CM_{Q}(Q^\ve)\big]:\nabla\vv
+\alpha{Q}:(\vv\cdot\nabla){Q}^\ve\Big)\Big\}\ud{x},
\end{align*}
from which, Lemma \ref{eq:ident-1} and (\ref{eq:M_Q}), we infer that
\begin{align*}
&\frac{\ud}{\ud{t}}\Big(\frac{1}{ReDe}E_1(Q)+\int_\BR\frac{1}{2(1-\gamma)}|\vv|^2\ud{x}\Big)\\
&=\int_\BR\bigg\{\frac{\ve}{ReDe^2}(B_Q-\alpha Q^\ve):\mathcal{N}(B_Q-\alpha Q^\ve)-\frac{4}{ReDe^2}\big(B_Q-\alpha{Q}^\ve\big):\CM_{Q}(B_Q-\alpha Q^\ve)\\
&\qquad+\frac{1}{ReDe}\big(B_Q-\alpha{Q}^\ve\big):\Big(2\CM_{Q}(\kappa^T)-\vv\cdot\nabla{Q}\Big)-\frac{\gamma}{Re(1-\gamma)}|\DD|^2
-\frac{1}{2Re}\DD:M_Q^{(4)}:\DD\\
&\qquad+\frac{1}{DeRe}\Big(-2\CM_{Q}(B_Q-\alpha Q^\ve):\nabla\vv
+\alpha{Q}:(\vv\cdot\nabla){Q}^\ve\Big)\bigg\}\ud{x}\nonumber\\
&=-\int_\BR\bigg\{-\frac{\ve}{ReDe^2}(B_Q-\alpha Q^\ve):\mathcal{N}(B_Q-\alpha Q^\ve)
+\frac{4}{ReDe^2}\big(B_Q-\alpha{Q}^\ve\big):\CM_{Q}(B_Q-\alpha Q^\ve)\\
&\qquad+\frac{\gamma}{Re(1-\gamma)}|\DD|^2
+\frac{1}{2Re}\DD:M_Q^{(4)}:\DD\bigg\}\ud{x}.
\end{align*}

Define the energy functional
\begin{align}
E_2(Q)=\int_\BR-\ln{Z}_Q+ Q:B_Q+\frac{\alpha}{2}\big(-|{Q}|^2+\frac{G}{2}\ve|\nabla{Q}|^2\big)\ud{x}.
\end{align}
Then the system (\ref{eq:Q-D})-(\ref{eq:v-D}) obeys the following energy dissipation law
\begin{align}
&\frac{\ud}{\ud{t}}\Big\{\int_\BR\frac{1}{2}|\vv|^2\ud{x}+\frac{1-\gamma}{ReDe}E_2(Q)\Big\}
=-\int_\BR\bigg\{\frac{\gamma}{Re}|\DD|^2+\frac{1-\gamma}{2Re}\DD:M_Q^{(4)}:\DD\nonumber\\
&\qquad-\frac{\ve(1-\gamma)}{ReDe^2}\big(B_Q-\alpha{Q}-\frac{G}{2}\alpha\ve\Delta{Q}\big)
:\mathcal{N}\big(B_Q-\alpha{Q}-\frac{G}{2}\alpha\ve\Delta{Q}\big)\nonumber\\\label{eq:energy-2}
&\qquad+\frac{4(1-\gamma)}{ReDe^2}\big(B_Q-\alpha{Q}-\frac{G}{2}\alpha\ve\Delta{Q}\big):
\CM_Q\big(B_Q-\alpha{Q}-\frac{G}{2}\alpha\ve\Delta{Q}\big)\bigg\}\ud{x}.
\end{align}

\subsection{Some remarks on new Q-tensor equation}

New tensor equation (\ref{eq:Q-D})-(\ref{eq:v-D}) derived from the Doi-Onsager equation by the Bingham closure  keeps many important physical properties:

\begin{itemize}

\item Two kinds of physical diffusions(translational and rotational diffusion) are preserved;

\item The parameters are not phenomenological but have definite physical meaning;

\item The eigenvalues of $Q$  satisfy the physical constrain: $-\frac13\le\lambda_i\le\frac23$ if they are satisfied initially;

\item The Ericksen-Leslie equation can be deduced from the Q-tensor equation by taking the small Deborah number limit.

\end{itemize}

Furthermore, we want to comment that Q-tensor equation can be viewed as some kind of regularization of the Ericksen-Leslie equation, since it removes
the constrain $|\nn|=1$, and allows the liquid crystal to be biaxial. Motivated by the work of the harmonic heat flow,
the classical regularization of the Ericksen-Leslie equation is to add
a penality term $\f1 {\ve^2}(|\nn|^2-1)^2$ in $E_F$ to remove some higher-order nonlinearities due to the constrain $|\nn|=1$ \cite{LL}.
That is so called the Ginzburg-Landau approximation. However, whether the Ericksen-Leslie equation
can be recovered from the approximated equation by taking $\ve\rightarrow 0$ is still a challenging question. Moreover, the physical meaning of Ginzburg-Landau approximation is also unclear.

\section{The critical points of the bulk free energy}

Let $\CF_{B}(Q)$ be the bulk energy of $E_2(Q)$, that is,
\begin{align}
\CF_{B}(Q)=\int_{\BR}-\ln{Z}_Q+ Q:B_Q-\frac{\alpha}{2}|{Q}|^2 \ud\xx.
\end{align}
Due to (\ref{eq:lnZ}), it is easy to see that  its critical point satisfies the  equation
$$B_Q=\alpha Q.$$
The solution of this equation is related to the critical points
of the Maier-Saupe energy functional. More precisely, the following proposition has been proven by \cite{FS, LZZ, ZWFW}.

\begin{proposition}\label{prop:q-1}
Let  $\eta$ be a solution of the equation
\begin{eqnarray}\label{eta-alpha}
\frac{3\ue^\eta}{\int_0^1\ue^{\eta{z^2}}\ud{z}}=3+2\eta+\frac{\eta^2}{\alpha}.
\end{eqnarray}
Then
\begin{align}
B_Q-\alpha{Q}=0 \quad\Longleftrightarrow\quad B_Q=\eta(\nn\nn-\frac{1}{3}\II),\quad \nn\in \BS.
\end{align}
There exists a critical number $\alpha^*>0$ such that
\begin{enumerate}
\item When $\alpha<\alpha^*$, $\eta=0$ is the only solution of (\ref{eta-alpha});
\item When $\alpha=\alpha^*$, except  $\eta=0$, there is another solution $\eta=\eta^*$ of (\ref{eta-alpha});
\item When $\alpha>\alpha^*$, except $\eta=0$, there are other two solutions $\eta_1>\eta^*>\eta_2$ of (\ref{eta-alpha}).
\end{enumerate}
\end{proposition}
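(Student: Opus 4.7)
The plan is to carry out the proof in three stages: reduce the tensorial equation $B_Q=\alpha Q$ to a scalar equation via an axial-symmetry argument, convert the resulting moment identity into the explicit form (\ref{eta-alpha}), and then analyze the bifurcation structure of that scalar equation in $\eta$.

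For the equivalence, I first observe that $B_Q=\alpha Q$ is precisely the Euler-Lagrange condition for critical points of the homogeneous Maier-Saupe free energy
\[
A_0[f]=\int_{\BS}f\ln f\,\ud\mm-\tfrac{\alpha}{2}\,Q:Q,\qquad Q=\int_{\BS}(\mm\mm-\tfrac13\II)f\,\ud\mm;
\]
varying in $f$ at fixed total mass yields $\ln f=\mm\mm:B_Q-\ln Z_Q$ together with the self-consistency $B_Q=\alpha Q$. The classical axisymmetry theorem proved in \cite{FS, LZZ, ZWFW} then forces every such critical point to be uniaxial, so that $B_Q=\eta(\nn\nn-\tfrac13\II)$ for some $\eta\in\mathbb{R}$ and $\nn\in\BS$. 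I would invoke this reduction as a black box, since it is the only genuinely nontrivial input.

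Next, taking $\nn=\ee_3$ and $z=m_3=\cos\theta$, integrating out the azimuthal angle reduces the $(3,3)$-entry of $B_Q=\alpha Q$ to the scalar moment identity
\[
\frac{2\eta}{3\alpha}+\frac{1}{3}=\frac{\int_0^1 z^2\ue^{\eta z^2}\,\ud z}{\int_0^1 \ue^{\eta z^2}\,\ud z}.
\]
Writing $z^2\ue^{\eta z^2}=z\,\tfrac{\ud}{\ud z}\bigl(\tfrac{\ue^{\eta z^2}}{2\eta}\bigr)$ and integrating by parts expresses the numerator in terms of $\ue^\eta$ and $\int_0^1\ue^{\eta z^2}\,\ud z$; clearing denominators yields (\ref{eta-alpha}).

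For the bifurcation analysis, $\eta=0$ is always a solution since both sides of (\ref{eta-alpha}) equal $3$ there. For $\eta\neq 0$ I rewrite (\ref{eta-alpha}) as $\alpha=\alpha(\eta):=\eta^2/h(\eta)$ with $h(\eta):=\frac{3\ue^\eta}{\int_0^1 \ue^{\eta z^2}\,\ud z}-3-2\eta$, and study the graph of $\alpha(\eta)$ by standard calculus: a Taylor expansion at $\eta=0$ combined with a Laplace-method estimate on $\int_0^1\ue^{\eta z^2}\,\ud z$ as $|\eta|\to\infty$ pins down the boundary behaviour, and a sign analysis of $\alpha'(\eta)$ isolates a unique global minimum at some $\eta^*$. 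Setting $\alpha^*:=\alpha(\eta^*)$, the three cases of the proposition follow by counting preimages of $\alpha=\mathrm{const}$ under $\eta\mapsto\alpha(\eta)$. The principal obstacle is the axisymmetry reduction in step one; the remaining monotonicity estimates on $h$ and $\alpha$ are elementary but require care, since the integral $\int_0^1\ue^{\eta z^2}\,\ud z$ admits no closed form and every manipulation must be performed through the integration-by-parts relations between its $\eta$-derivatives.
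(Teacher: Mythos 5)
Your outline reconstructs the argument of the cited references \cite{FS, LZZ, ZWFW}, which is exactly what the paper does: Proposition \ref{prop:q-1} is stated with the remark that it ``has been proven by'' those references, and no proof appears in the text. Your three-stage plan --- identify $B_Q=\alpha Q$ with the Euler--Lagrange condition of the Maier--Saupe functional, invoke the axisymmetry classification as a black box, then reduce to a scalar equation and count preimages of $\eta\mapsto\alpha(\eta)$ --- is the standard and intended route, and the only genuinely hard input (axisymmetry of all critical points) is treated as an external citation in both your proposal and the paper.

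One concrete point needs fixing. Your moment identity
\begin{equation*}
\frac{2\eta}{3\alpha}+\frac13=\frac{\int_0^1 z^2\ue^{\eta z^2}\,\ud z}{\int_0^1\ue^{\eta z^2}\,\ud z}
\end{equation*}
is the correct transcription of $\eta=\alpha S_2$ under the paper's conventions, but clearing denominators using $\int_0^1 z^2\ue^{\eta z^2}\ud z=\frac{1}{2\eta}\big(\ue^{\eta}-\int_0^1\ue^{\eta z^2}\ud z\big)$ gives
\begin{equation*}
\frac{3\ue^{\eta}}{\int_0^1\ue^{\eta z^2}\ud z}=3+2\eta+\frac{4\eta^2}{\alpha},
\end{equation*}
not (\ref{eta-alpha}) as printed: the coefficient of $\eta^2/\alpha$ is $4$, not $1$. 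This is not an error in your reduction but an inconsistency in the stated equation; the $4\eta^2/\alpha$ version is the one compatible with the rest of the paper, since the vanishing of the $(\hat B_{13}^2+\hat B_{23}^2)$-coefficient in the proof that $\nn\nn^\bot+\nn^\bot\nn\in\mathrm{Ker}\,\CK_\nn$ forces $\alpha(A_2-A_4)=A_0$, and $A_2-A_4=\frac{1}{4\eta^2}\big(3\ue^{\eta}-(3+2\eta)A_0\big)$. So your claim that the identity ``yields (\ref{eta-alpha})'' is literally false by a factor of $4$, and you should either derive the corrected equation or flag the discrepancy. Two smaller remarks on the last stage: to conclude that there are \emph{exactly} two nonzero solutions for $\alpha>\alpha^*$ and exactly one for $\alpha=\alpha^*$ you need strict monotonicity of $\alpha(\eta)$ on each side of $\eta^*$, not merely that $\eta^*$ is a unique global minimizer; and you should note that $h(\eta)>0$ for $\eta\neq0$ (it vanishes to second order at $\eta=0$, giving $\alpha(0^\pm)=15/2$), so that $\alpha(\eta)$ is well defined and positive on all of $\mathbb{R}\setminus\{0\}$ and the negative-$\eta$ (oblate) branch is accounted for.
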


In the sequel, we always take $\eta=\eta_1$. Let $A_k=\int_0^1 x^k{\ue^{\eta x^2}}\ud x.$  The following  facts have been proved in  \cite{WZZ}:  for $\eta>\eta^*$
\begin{align}\label{fact-1}
{3A_2^2+2A_0A_2-5A_0A_4}>0,\quad 6A_2-5A_4-A_0>0.
\end{align}
We also define
\begin{align}\label{order-parameter}
S_2=\frac{3A_2-A_0}{2A_0},\quad S_4=\frac{1}{8A_0}(35A_4-30A_2+3A_0).
\end{align}
If  $B_{Q_0}=\alpha Q_0$, then we have
\ben
Q_0=S_2(\nn\nn-\frac13\II),\quad B_0\equiv B_{Q_0}=\eta(\nn\nn-\frac13\II).
\een
In such case, we have
\beno
f_0&\equiv& f_{Q_0}=\frac{\ue^{\eta(\mm\cdot\nn)^2}}{\int_\BS\ue^{\eta(\mm\cdot\nn)^2}\ud\mm},\\
M_{Q_0,{ijkl}}^{(4)}&=&S_4n_in_jn_kn_l+\frac{S_2-S_4}{7}\Big(n_in_j\delta_{kl}+n_in_k\delta_{jl}+n_in_l\delta_{jk}+n_jn_k\delta_{il}\nonumber\\
&&+n_jn_l\delta_{ik}+n_kn_l\delta_{ij}\Big)+\big(\frac{S_4}{35}-\frac{2S_2}{21}+\frac{1}{15}\big)(\delta_{ij}\delta_{kl}+\delta_{ik}\delta_{jl}
+\delta_{il}\delta_{jk}).
\eeno

Now we introduce some linear operators associated to the critical point $Q_0$,  which will play an important role in the derivation of the Ericksen-Leslie
equation  from the Q-tensor equation. We first introduce the space
\begin{align}
\mathfrak{S}=\{B\in\mathbb{M}^{3\times3}: B \text{ is symmetric and trace free}\}\nonumber
\end{align}
endowed with the inner product
\begin{align}
\langle B_1, B_2\rangle_{\FS}\equiv B_1:B_2=\sum_{i,j=1,2,3}B_{1ij}B_{2ij}.\nonumber
\end{align}
Given  $\nn\in \BS$,  the linear operators $\mathcal{Q}_{\nn}, \mathcal{J}_{\nn},  \mathcal{K}_{\nn}, \mathcal{L}_{\nn}$: $\mathfrak{S}\mapsto\mathfrak{S}$ defined as
\begin{align*}
&\mathcal{Q}_{\nn}(B)=M_{Q_0}^{(4)}:B-\frac{1}{3}\II(Q_0:B)-Q_0(Q_0:B),\\
&\mathcal{J}_{\nn}(B)=\frac{1}{3}B+\frac{1}{2}(B\cdot Q_0+Q_0\cdot B)-B:{M^{(4)}_{Q_0}},\\
&\mathcal{K}_{\nn}(B)=B-\alpha\CQ_\nn(B),\\
&\mathcal{L}_{\nn}(B)=-2\mathcal{J_\nn}(\mathcal{K_\nn}(B)),
\end{align*}
where the operator $\CL_\nn$ can be written as
\begin{align*}
\CL_\nn(B)=\frac{1}{3}(B-\alpha\CQ_\nn(B))+\frac{1}{2}\big((B-\alpha\CQ_\nn(B))\cdot Q_0+Q_0\cdot(B-\alpha\CQ_\nn(B))\big)-(B-\alpha\CQ_\nn(B)):{M^{(4)}_{Q_0}}.
\end{align*}

\begin{proposition}We have the following properties:
\begin{itemize}
\item[1.] $\mathcal{Q}_\nn$, $\mathcal{J}_\nn$, $\mathcal{K}_\nn,  \mathcal{L}_{\nn}$ are self-adjoint;
\item[2.] $\mathcal{J}_\nn$ is positive, and $\mathcal{K}_\nn$ is non-negative;
\item[3.] $\mathrm{Ker}~\mathcal{K}_{\nn}=\mathrm{Ker}~\mathcal{L}_{\nn}=\big\{\nn\nn^\bot+\nn^\bot\nn: \nn^\bot\in \BR, \nn\cdot \nn^\bot=0\big\}$.
\end{itemize}
\end{proposition}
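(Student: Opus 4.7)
The plan is to exploit the O(2)-symmetry that stabilizes the uniaxial critical point $Q_0=S_2(\nn\nn-\frac13\II)$: both $Q_0$ and $M^{(4)}_{Q_0}$ are invariant under rotations and reflections fixing the axis $\nn$, so every linear operator built from them commutes with this O(2)-action. Under O(2), the space $\FS$ decomposes as $V_0\oplus V_1\oplus V_2$ into three absolutely irreducible real subspaces: the trivial line $V_0=\mathrm{span}(Q_0)$, the ``spin-$1$'' plane $V_1=\{\nn\nn^\bot+\nn^\bot\nn:\nn^\bot\in\BR,\ \nn\cdot\nn^\bot=0\}$, and the ``spin-$2$'' plane $V_2=\{B\in\FS:B\nn=0\}$. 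By Schur's lemma, each of $\CQ_\nn$, $\CJ_\nn$, $\CK_\nn$ acts as a scalar on each $V_i$, and they mutually commute.

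For Part~1 I would check self-adjointness of $\CQ_\nn,\CJ_\nn,\CK_\nn$ directly from the definitions: the total symmetry of $M^{(4)}_{Q_0,ijkl}=\langle m_im_jm_km_l\rangle_{f_{Q_0}}$ and the symmetry of $Q_0$ make the bilinear forms $B_1:\CJ_\nn(B_2)$ and $B_1:\CQ_\nn(B_2)$ manifestly symmetric in $B_1,B_2$, while the trace-free condition on $\FS$ kills the contribution of the $\frac13\II(Q_0:B)$ correction inside $\CQ_\nn$. Self-adjointness of $\CL_\nn=-2\CJ_\nn\circ\CK_\nn$ then reduces to the commutator $[\CJ_\nn,\CK_\nn]=0$, which is automatic from the scalar action above.

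For Part~2, the key observation for $\CJ_\nn$ is that for symmetric $B$ the antisymmetric combination $\frac12(B\cdot Q_0-Q_0\cdot B)$ tested against $B$ vanishes by cyclicity of the trace, so $\CJ_\nn(B):B=\CM_{Q_0}(B):B$. Lemma~\ref{lem:ident-2} then gives $\CJ_\nn(B):B=\int_\BS|\mm\times(B\cdot\mm)|^2 f_{Q_0}\ud\mm\geq 0$; equality forces $B\cdot\mm\parallel\mm$ for a.e.\ $\mm\in\BS$ (since $f_{Q_0}>0$), hence $B\propto\II$, and finally $B=0$ by tracelessness, yielding strict positivity. For $\CK_\nn\geq 0$ I would compute its scalar eigenvalue on each $V_i$ using the explicit formula~(\ref{order-parameter}) and the expression for $M^{(4)}_{Q_0}$; combined with the critical-point identity $\eta=\alpha S_2$ (from $B_{Q_0}=\alpha Q_0$), the eigenvalues on $V_0$ and $V_2$ reduce to quantities whose positivity is exactly the content of~(\ref{fact-1}), while the $V_1$-eigenvalue comes out to zero.

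For Part~3, the inclusion $V_1\subset\mathrm{Ker}\,\CK_\nn$ has a clean structural reason: SO(3)-invariance of $\CF_B$ ensures that the curve $Q_t=S_2(R_t\nn)(R_t\nn)^T-\frac{S_2}3\II$ of rotated critical points satisfies $B_{Q_t}-\alpha Q_t\equiv 0$; differentiating at $t=0$ and using $\partial B_Q/\partial Q=\CQ_Q^{-1}$ (obtained by differentiating the Bingham map) gives $\CK_\nn(\dot Q)=0$ with $\dot Q=S_2(\nn^\bot\nn+\nn\nn^\bot)\in V_1$. Strict positivity of $\CK_\nn$ on $V_0\oplus V_2$ from Part~2 gives the reverse inclusion. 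Finally, since $\CJ_\nn$ is invertible by Part~2, $\mathrm{Ker}\,\CL_\nn=\mathrm{Ker}\,\CK_\nn$. The main obstacle I anticipate is the explicit algebraic verification that the $\CK_\nn$-eigenvalues on $V_0$ and $V_2$ are strictly positive: this requires carefully combining the uniaxial form of $M^{(4)}_{Q_0}$ with the critical-point relation and the moment inequalities~(\ref{fact-1}) established in \cite{WZZ}.
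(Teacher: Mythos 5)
Your plan is correct and reaches all three conclusions, but it is organized quite differently from the paper's proof, and the comparison is worth recording. For Part 1 the paper proves self-adjointness of $\CL_\nn$ by brute force: it expands $Q_0\cdot\CQ_\nn(B_1)$, $\CQ_\nn(B_1)\cdot Q_0$ and $\CQ_\nn(B_1):M^{(4)}_{Q_0}$ explicitly via the uniaxial formula for $M^{(4)}_{Q_0}$ and checks symmetry of each bilinear form by hand; your reduction to $[\CJ_\nn,\CK_\nn]=0$ via Schur's lemma on the $O(2)$-isotypic splitting $\FS=V_0\oplus V_1\oplus V_2$ is valid (the three components are pairwise non-isomorphic and absolutely irreducible, so every equivariant operator is block-scalar and all such operators commute) and replaces that page of algebra with a structural observation. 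For Part 3 the paper reads the kernel off from the equality case of the explicit quadratic form $\langle\CK_\nn(B),B\rangle$ computed in a frame with $\nn=\ee_3$, whereas you obtain $V_1\subset\mathrm{Ker}\,\CK_\nn$ by differentiating the rotated family of critical points and using that the linearization of the Bingham map at $B_0$ is $\CQ_\nn$ --- which is exactly the relation $Q_1=\CQ_\nn(B_1)$ the paper derives later in Section 4 --- a Goldstone-type argument that explains \emph{why} the kernel is the tangent space to the orbit of uniaxial equilibria. You also supply a detail the paper leaves implicit: strict positivity of $\CJ_\nn$ (equality in Lemma \ref{lem:ident-2} forces $B\cdot\mm\parallel\mm$ for every $\mm\in\BS$, hence $B\propto\II$, hence $B=0$ by tracelessness), which is needed to pass from $\mathrm{Ker}\,\CK_\nn$ to $\mathrm{Ker}\,\CL_\nn$. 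Where the two arguments coincide in substance is Part 2: your ``eigenvalue on each $V_i$'' computation is a reorganization of the paper's identity expressing $\langle\CK_\nn(B),B\rangle$ through $\hat B_{12}^2$, $\hat B_{11}\hat B_{22}$ and $(\hat B_{11}+\hat B_{22})^2$, and the positivity of the $V_0$- and $V_2$-eigenvalues is precisely the content of (\ref{fact-1}) combined with $\eta=\alpha S_2$, as you predict. The only caveat is that this flagged eigenvalue verification is the quantitative heart of the proposition and occupies most of the paper's proof; your write-up correctly identifies every ingredient for it but does not carry it out.
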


\begin{proof} It is easy to see that  $\mathcal{Q}_\nn$, $\mathcal{J}_\nn$, $\mathcal{K}_\nn$ are self-adjoint.
In the following, we prove that   $\mathcal{L}_{\nn}$ is self-adjoint. We have
\begin{align}
Q_0\cdot\CQ_\nn(B_1)=&Q_0\cdot\Big(S_4\nn\nn\nn\nn:B_1+\frac{S_2-S_4}{7}\big(
2(\nn\nn\cdot B_1+B_1\cdot\nn\nn)+\II\nn\nn:B_1\big)\nonumber\\
&+\big(\frac{S_4}{35}-\frac{2S_2}{21}
+\frac{1}{15}\big)2B_1\Big)-\frac13Q_0\cdot(\nn\nn:B_1)\nonumber\\
=&S_2S_4\frac{2}{3}\nn\nn\nn\nn:B_1+\frac{S_2(S_2-S_4)}{7}\Big(\frac43\nn\nn\cdot B_1+3\nn\nn(\nn\nn:B_1)
-\frac23B_1\cdot\nn\nn-\frac13\II(\nn\nn:B_1)\Big)\nonumber\\
&+\big(\frac{S_4}{35}-\frac{2S_2}{21}
+\frac{1}{15}\big)2S_2\big(\nn\nn\cdot B_1-\frac13B_1\big)-\frac{S_2^2}3(\nn\nn-\frac13\II)(\nn\nn:B_1).\nonumber
\end{align}
Hence,
\begin{align*}
\langle Q_0\cdot\CQ_\nn(B_1), B_2\rangle=\langle Q_0\cdot\CQ_\nn(B_2), B_1\rangle.
\end{align*}
Similarly, we have
\begin{align*}
\langle \CQ_\nn(B_1)\cdot Q_0, B_2\rangle=\langle \CQ_\nn(B_2)\cdot Q_0, B_1\rangle.
\end{align*}
On the other hand, we have
\begin{align*}
&\big<{\CQ_\nn(B_1)}:M_{Q_0}^{(4)}, B_2\big>=\big<{\CQ_\nn(B_1)}, M_{Q_0}^{(4)}:B_2\big>\\
&=\big<M_{Q_0}^{(4)}:B_1+\frac{S_2}{3}\II(\nn\nn:B_1)+
S_2^2(\nn\nn-\frac13\II)(\nn\nn:B_1), M_{Q_0}^{(4)}:B_2\big>\\
&=\big<M_{Q_0}^{(4)}:B_1, M_{Q_0}^{(4)}:B_2\big>+S_2^2\Big((\nn\nn:B_1)\big[\nn\nn:M_{Q_0}^{(4)}:B_2\big]\Big)\\
&=\big<M_{Q_0}^{(4)}:B_1, M_{Q_0}^{(4)}:B_2\big>+S_2^2(\nn\nn:B_1)(\nn\nn:B_2)\big(\frac{12S_4}{35}+\frac{11S_2}{21}+\frac{1}{15}\big)\\
&=\big<{\CQ_\nn(B_2)}:M_{Q_0}^{(4)}, B_1\big>.
\end{align*}
Thus,  the operator $\CL_\nn$ is self adjoint.

It follows from Lemma \ref{lem:ident-2} that  the operator $\CJ_\nn$ is positive.  Now we prove that the operator $\CK_\nn$ is non-negative.  By the definition, we have
\begin{align}
\langle\CK_\nn(B), B\rangle=&|B|^2-
\alpha M_{Q_0}^{(4)}:BB+\alpha(Q_0:B)^2\nonumber\\
=&|B|^2+\alpha\Big( (S_2^2-S_4)(\nn\nn:B)^2-\frac{S_2-S_4}{7}4|\nn\cdot B|^2
-\big(\frac{S_4}{35}-\frac{2S_2}{21}
+\frac{1}{15}\big)2|B|^2\Big).\nonumber
\end{align}
Thanks to the definition of $S_2$ and $S_4$,  we know that
\begin{align*}
&\frac{S_2-S_4}{7}=\frac{1}{8A_0}(-5A_4+6A_2-A_0),\\
&\frac{S_4}{35}-\frac{2S_2}{21}+\frac{1}{15}=\frac{1}{8A_0}(A_4-2A_2+A_0).
\end{align*}
Choose an orthogonal matrix $P=(\nn_1, \nn_2, \nn_3)^T$ such that $\nn_3=\nn$,
hence $P\cdot\nn=\mathbf{e}_3=(0,0,1)^T$. Let $\hat{B}=PBP^T$. We have
\begin{align*}
&|B|^2=|\hat{B}|^2,\\
&|B\cdot\nn|^2=|PB\cdot\nn|^2=|\hat{B}\cdot\mathbf{e}_3|^2=\hat{B}_{13}^2+\hat{B}_{23}^2+\hat{B}_{33}^2,\\
&\nn\nn:B=\nn^TB\nn=\mathbf{e}_3^T\cdot PBP^T\cdot\mathbf{e}_3=\hat{B}_{33}.
\end{align*}
Then we get
\begin{align}
\langle\CK_\nn(B), B\rangle
=&\hat{B}_{ij}^2+\alpha\Big( (S_2^2-S_4)\hat{B}_{33}^2-\frac{S_2-S_4}{7}4(\hat{B}_{13}^2+\hat{B}^2_{23}+\hat{B}^2_{33})
-\big(\frac{S_4}{35}-\frac{2S_2}{21}
+\frac{1}{15}\big)2\hat{B}_{ij}^2\Big)\nonumber\\
=&2\Big(1-2\alpha\big(\frac{S_4}{35}-\frac{2S_2}{21}
+\frac{1}{15}\big)\Big)\hat{B}_{12}^2+\Big(1-2\alpha\big(\frac{S_4}{35}-\frac{2S_2}{21}
+\frac{1}{15}\big)\Big)(\hat{B}_{11}^2+\hat{B}_{22}^2)\nonumber\\
&+\Big(2-4\alpha\big(\frac{S_4}{35}-\frac{2S_2}{21}
+\frac{1}{15}\big)-4\alpha\frac{S_2-S_4}{7}\Big)(\hat{B}_{13}^2+\hat{B}_{23}^2)\nonumber\\
&+\Big(1-2\alpha\big(\frac{S_4}{35}-\frac{2S_2}{21}
+\frac{1}{15}\big)+\alpha(S_2^2-S_4)-4\alpha\frac{S_2-S_4}{7}\Big)\hat{B}_{33}^2\nonumber\\
=&\frac{6A_2-5A_4-A_0}{2(A_2-A_4)}\hat{B}_{12}^2+\frac{6A_2-5A_4-A_0}{4(A_2-A_4)}(\hat{B}_{11}^2+\hat{B}_{22}^2)\nonumber\\
&+\Big(-\frac{6A_2-5A_4-A_0}{4(A_2-A_4)}+\alpha(S_2^2-S_4)\Big)(\hat{B}_{11}+\hat{B}_{22})^2\nonumber\\
=&\frac{6A_2-5A_4-A_0}{2(A_2-A_4)}\hat{B}_{12}^2
-\frac{6A_2-5A_4-A_0}{2(A_2-A_4)}\hat{B}_{11}\hat{B}_{22}+\alpha(S_2^2-S_4)(\hat{B}_{11}+\hat{B}_{22})^2.\nonumber
\end{align}
Furthermore, we know by (\ref{fact-1}) that
\begin{align}
&4\alpha(S_2^2-S_4)-\frac{6A_2-5A_4-A_0}{2(A_2-A_4)}\nonumber\\
&=\frac{4A_0}{A_2-A_4}\cdot\frac{2(3A_2-A_0)^2-A_0(35A_4-30A_2+3A_0)}{8A_0^2}
-\frac{6A_2-5A_4-A_0}{2(A_2-A_4)}\nonumber\\
&=\frac{1}{A_2-A_4}\cdot\frac{2(3A_2-A_0)^2-2A_0(15A_4-12A_2+A_0)}{2A_0}\nonumber\\
&=\frac{3}{A_2-A_4}\cdot\frac{3A_2^2+2A_0A_2-5A_0A_4}{A_0}>0,\nonumber
\end{align}
and $6A_2-5A_4-A_0>0.$  This proves
$$\langle\CK_\nn(B), B\rangle\ge0.$$
Moreover, the equality holds if and only if
$\hat B_{ij}=0$ for $\{i,j\}\neq\{1,3\}, \{2,3\}$,
which means that
$$B=P^T\hat{B}P=\hat{B}_{13}\nn\otimes\nn_1^T+\hat{B}_{23}\nn\otimes\nn_2^T
+\hat{B}_{13}\nn_1\otimes\nn^T+\hat{B}_{23}\nn_2\otimes\nn^T=\nn\nn^\bot+\nn^\bot\nn,$$
where $\nn^\bot=\hat{B}_{13}\nn_1+\hat{B}_{23}\nn_2$ satisfies $\nn^\bot\cdot\nn=0$.
This proves the second point and third point of the proposition.
\end{proof}

\begin{proposition}\label{prop:q6-linear}
We have
\begin{align}
B_0:\big[M^{(6)}_{Q_0}:B-{M^{(4)}_{Q_0}}(Q_0:B)\big]=B_0\cdot\CQ_\nn(B)+B\cdot{Q_0}+\frac13B-B:{M^{(4)}_{Q_0}}-\frac32\CQ_\nn(B).\nonumber
\end{align}
\end{proposition}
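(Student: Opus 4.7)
The plan is to derive the identity by differentiating the algebraic relation from Lemma \ref{eq:ident-1}, namely $\f32 Q=B_Q\cdot Q+\f13 B_Q-B_Q:M_Q^{(4)}$, with respect to the parameter $B_Q$ and evaluating at the uniaxial critical point $B_Q=B_0$, $Q=Q_0$. The key point is that the Bingham density $f_Q=\ue^{\mm\mm:B_Q}/Z_{B_Q}$ makes $Q$, $M_Q^{(4)}$ and $M_Q^{(6)}$ smooth functions of the matrix parameter $B_Q$, and their linearizations at $B_0$ reproduce exactly the operator $\CQ_\nn$ and the tensor contractions that appear in the statement.

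First I would compute the two linearization formulas. For a symmetric traceless perturbation $B\in\FS$ of $B_Q$, one has $\delta\ln Z_{B_Q}=(Q+\f13\II):B=Q:B$ by tracelessness of $B$, hence $\delta f_Q=f_Q(\mm\mm-Q):B$. Multiplying by $\mm\mm-\f13\II$ and by $\mm\mm\mm\mm$ respectively and integrating over $\BS$ yields
\begin{align*}
\f{\pa Q}{\pa B_Q}[B]&=M_Q^{(4)}:B-(Q+\tfrac13\II)(Q:B),\\
\f{\pa M_Q^{(4)}}{\pa B_Q}[B]&=M_Q^{(6)}:B-M_Q^{(4)}(Q:B),
\end{align*}
where a residual $\f13\II$ contribution in the first line cancels again by $\mathrm{tr}\,B=0$. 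Evaluated at $B_Q=B_0$, the first right-hand side is exactly $\CQ_\nn(B)$.

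Next I would differentiate the identity of Lemma \ref{eq:ident-1} in direction $B$, viewing $Q$, $B_Q$ and $M_Q^{(4)}$ all as functions of the single parameter $B_Q$. At $B_Q=B_0$ this produces
\begin{align*}
\f32\CQ_\nn(B)=B\cdot Q_0+B_0\cdot\CQ_\nn(B)+\f13 B-B:M_{Q_0}^{(4)}-B_0:\bigl[M_{Q_0}^{(6)}:B-M_{Q_0}^{(4)}(Q_0:B)\bigr],
\end{align*}
and moving the $M^{(6)}_{Q_0}$-term to the left while sending $\f32\CQ_\nn(B)$ to the right reproduces the claimed identity. The only subtle step is the derivation of the two variation formulas: one has to keep track of the $\delta\ln Z_{B_Q}$ correction from differentiating the normalization and use $\mathrm{tr}\,B=0$ to reduce $(Q+\f13\II):B$ to $Q:B$; beyond this, the argument is purely algebraic bookkeeping.
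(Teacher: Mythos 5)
Your argument is correct, and it reaches the identity by a genuinely different route than the paper. The paper's proof is a direct spherical integration by parts: it writes down the two identities $\int_\BS\CR\cdot\big((\mm\times\uu)(\mm\cdot\vv)\,f_0\,\mm\mm:B\big)\ud\mm=0$ and $\int_\BS\CR\cdot\big((\mm\times\uu)(\mm\cdot\vv)\,f_0\big)\ud\mm=0$ for arbitrary $\uu,\vv\in\BR$, i.e.\ it reruns the computation behind Lemma~\ref{eq:ident-1} with the extra weight $\mm\mm:B$ inserted, and extracts the sixth-moment identity from the pair. You instead regard Lemma~\ref{eq:ident-1}, $\f32 Q=B_Q\cdot Q+\f13 B_Q-B_Q:M_Q^{(4)}$, as an identity in the Bingham parameter $B_Q\in\FS$ and differentiate it in the direction $B$ at $B_Q=B_0$. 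Your variation formulas are right: $\delta f_Q=f_Q(\mm\mm-Q):B$ after the $\delta\ln Z_Q=Q:B$ normalization correction, whence $\delta Q=\CQ_\nn(B)$ and $\delta M_Q^{(4)}=M_{Q_0}^{(6)}:B-M_{Q_0}^{(4)}(Q_0:B)$ at the critical point, and the product rule plus rearrangement gives exactly the stated identity (with the noncommutative products in the correct order, since $\delta(B_Q\cdot Q)=B\cdot Q_0+B_0\cdot\CQ_\nn(B)$). The two proofs are close relatives — your perturbation $f_0\,(\mm\mm-Q_0):B$ differs from the paper's weight $f_0\,\mm\mm:B$ by $(Q_0:B)f_0$, which is precisely what the paper's second identity annihilates — but your packaging is arguably more illuminating: it exhibits the proposition as the linearization of Lemma~\ref{eq:ident-1} at $Q_0$, which explains why the operator $\CQ_\nn$ (the derivative of $Q$ with respect to $B_Q$) appears there at all, and it requires no new sphere integrals. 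The one point worth stating explicitly in a write-up is that Lemma~\ref{eq:ident-1} holds for every admissible $Q$, hence identically as a function of $B_Q\in\FS$, so the directional differentiation is legitimate.
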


\begin{proof} Proposition follows from the following two equalities:
\begin{align*}
&\int_\BS\big((\mm\cdot\vv)(\mm\times\uu)\cdot\CR\big({f_0}\mm\mm:B\big)+
\CR\cdot\big(\mm\times\uu\mm\cdot\vv)f_0\mm\mm:B\big)\ud\mm=0,\\
&\int_\BS\big((\mm\cdot\vv)(\mm\times\uu)\cdot\CR{f_0}+
\CR\cdot\big(\mm\times\uu\mm\cdot\vv)f_0\big)\ud\mm=0,
\end{align*}
for any $\uu, \vv\in \BR$.
\end{proof}
\begin{remark}
A direct consequence of Proposition \ref{prop:q6-linear} is that
$$B_0\cdot\CQ_\nn(B)+B\cdot{Q_0}=\CQ_\nn(B)\cdot B_0+{Q_0}\cdot B,$$
or equivalently(recalling $B_{0}=\al Q_0$)
\begin{align}\label{eq:conseq-symm}
(B-\alpha\CQ_\nn(B))\cdot Q_0={Q_0}\cdot(B-\alpha\CQ_\nn(B)).
\end{align}
\end{remark}

\section{From Q-tensor equation to Ericksen-Leslie equation}

In this section, we will derive the Ericksen-Leslie equation (\ref{eq:EL}) from the Q-tensor equation (\ref{eq:Q-D})-(\ref{eq:v-D})
by taking small Deborah number limit.
For this end, we take $De=\ve$.

\subsection{Formal expansion}

We make a formal Taylor expansion in $\ve$ for $(Q,\vv, B_Q)$:
\begin{align*}
&Q=Q_0+\ve{Q}_1+\ve^2Q_2+\cdots,\\
&\vv=\vv_0+\ve\vv_1+\ve^2\vv_2+\cdots,\\
&B_Q=B_0+\ve B_1+\ve^2 B_2+\cdots.
\end{align*}
Hence,
\begin{align*}
Z_Q&=\int_\BS\ue^{\mm\mm:(B_0+\ve{B_1}+\ve^2B_2+\cdots)}\ud\mm=Z_{Q_0}\big(1+\ve{Q_0:B_1}+O(\ve^2)\big),\\
Q_0+\ve{Q_1}+\cdots&=\frac{\int_\BS(\mm\mm-\frac{1}{3}\II)
\ue^{\mm\mm:B_0}\Big(1+\ve\mm\mm:B_1+O(\ve^2)\Big)\ud\mm}
{Z_{Q_0}\Big(1+\ve{Q_0:B_1}+O(\ve^2)\Big)}\\
&=Q_0+\ve\big((M_{Q_0}^{(4)}-\frac13\II{Q_0}):B_1-Q_0(Q_0:B_1)\big)+O(\ve^2).
\end{align*}
This implies that
\beno
Q_1=\big(M_{Q_0}^{(4)}-\frac13\II{Q_0}\big):B_1-Q_0(Q_0:B_1).
\eeno
We also have
\begin{align*}
M^{(4)}_Q&=\f {\int_\BS\mm\mm\mm\mm\ue^{\mm\mm:B_0}(1+\ve\mm\mm:B_1+\cdots)\ud\mm} {Z_{Q_0}\Big(1+\ve{Q_0:B_1}+O(\ve^2)\Big)}\\
&=M^{(4)}_{Q_0}+\ve\big({M^{(6)}_{Q_0}}:B_1-M^{(4)}_{Q_0}(Q_0:B_1)\big)+O(\ve^2).
\end{align*}

Plugging them into (\ref{eq:Q-D})-(\ref{eq:v-D}), we find that the terms of $O(\ve^{-1})$ satisfy
\begin{align}\label{expansion-0}
-3Q_0+2\alpha\Big(\frac{1}{3}Q_0+{Q_0}\cdot{Q_0}-{Q_0}:M_{Q_0}^{(4)}\Big)=0.
\end{align}
While the terms of $O(1)$ satisfy
\begin{align}
&\frac{\pa{{Q_0}}}{\pa{t}}+\vv_0\cdot\nabla{Q_0}=
\Big\{-6Q_1+2\alpha\big[2\CJ_\nn(Q_1)-2Q_0:\big\{{M^{(6)}_{Q_0}}:B_1
-{M^{(4)}_{Q_0}}(Q_0:B_1)\big\}\big]\Big\}\nonumber\\\label{tensor-q-expan1}
&\qquad\qquad\qquad+2G\alpha\CJ_\nn(\Delta Q_0)+\big(\CM_{Q_0}(\kappa^T)+\CM_{Q_0}^T(\kappa^T)\big),\\\nonumber
&\frac{\pa{\vv_0}}{\pa{t}}+\vv_0\cdot\nabla\vv_0=-\na p_0+
\frac{\gamma}{Re}\Delta\vv_0+\frac{1-\gamma}{2Re}\nabla\cdot(\DD_0:M_{Q_0}^{(4)})
+\frac{1-\gamma}{Re}\bigg\{-\nabla\cdot\Big[-3Q_1+2\alpha\big(\CJ_\nn(Q_1)\nonumber\\
&\quad+{G}\CM_{Q_0}(\Delta{Q_0})
-Q_0:\big\{{M^{(6)}_{Q_0}}:B_1-{M^{(4)}_{Q_0}}(Q_0:B_1)\big\}\big)\Big]
+\alpha \frac{G}{2}{Q_0}:\nabla\Delta{Q_0}\bigg\}.\label{tensor-v-expan1}
\end{align}

By Lemma \ref{eq:ident-1}, (\ref{expansion-0}) implies that
\begin{align}
\CM_{Q_0}(B_{0})-\alpha \CM_{Q_0}(Q_0)=0,\nonumber
\end{align}
which means that
\begin{align}
\CM_{Q_0}(B_{0}-\alpha Q_0)=0,\nonumber
\end{align}
or
\begin{align}
\langle\CM_{Q_0}(B_{0}-\alpha Q_0), B_{0}-\alpha Q_0\rangle=0,\nonumber
\end{align}
from which and Lemma \ref{lem:ident-2}, it follows that
\begin{align}
 B_{0}-\alpha Q_0=0.\nonumber
\end{align}
Hence, we get by Proposition \ref{prop:q-1} that
\begin{align}
B_{0}=\eta(\nn\nn-\frac13\II),\quad {Q_0}=S_2(\nn\nn-\frac13\II).
\end{align}

It follows from Proposition \ref{prop:q6-linear} that
\begin{align*}
&-6Q_1+2\alpha\Big[2\CJ_\nn(Q_1)
-2Q_0:\big\{{M^{(6)}_{Q_0}}:B_1-{M^{(4)}_{Q_0}}(Q_0:B_1)\big\}\Big]\\
&=-6Q_1+4\alpha\CJ_\nn(Q_1)
-4\Big(B_0\cdot Q_1+B_1\cdot{Q_0}+\frac13B_1-B_1:{M^{(4)}_{Q_0}}-\frac32Q_1\Big)\\
&=-4\Big(\frac{1}{3}(B_1-\alpha Q_1)+(B_1-\alpha Q_1)\cdot Q_0-(B_1-\alpha Q_1):{M^{(4)}_{Q_0}}\Big)\\
&=2\CL_\nn(B_1)
\end{align*}
where in the last equality we used the fact that $\big(B_1-\alpha Q_1\big)\cdot Q_0=Q_0\cdot\big(B_1-\alpha Q_1\big)$,
which follows from (\ref{eq:conseq-symm}) and $Q_1=\CQ_\nn(B_1)$. Thus, the system (\ref{tensor-q-expan1})-(\ref{tensor-v-expan1}) can be written as
\begin{align}\label{tensor-q-expan2}
\frac{\pa{{Q_0}}}{\pa{t}}+\vv_0\cdot\nabla{Q_0}=&
2\CL_\nn(B_1)+2G\alpha\CJ_\nn(\Delta Q_0)+\big(\CM_{Q_0}(\kappa^T)+\CM_{Q_0}^T(\kappa^T)\big),\\
\frac{\pa{\vv_0}}{\pa{t}}+\vv_0\cdot\nabla\vv_0=&-\na p_0+
\frac{\gamma}{Re}\Delta\vv_0+\frac{1-\gamma}{2Re}\nabla\cdot(\DD_0:M_{Q_0}^{(4)})\nonumber\\
&+\frac{1-\gamma}{Re}\bigg\{-\nabla\cdot\Big[\CL_\nn(B_1)
+{G\alpha}\CM_{Q_0}(\Delta{Q_0})\Big]
+\alpha \frac{G}{2}{Q_0}:\nabla\Delta{Q_0}\bigg\}.\label{tensor-v-expan2}
\end{align}

\subsection{The derivation of Ericksen-Leslie equation}

We show that the system (\ref{tensor-q-expan2})-(\ref{tensor-v-expan2}) is equivalent to the Ericksen-Leslie equation (we replace $\vv_0$
by $\vv$ for convenience)
\begin{eqnarray}\label{eq:EL-new}
\left\{
\begin{split}
&\vv_t+\vv\cdot\nabla\vv=-\nabla{p}+\frac{\gamma}{Re}\Delta\vv
+\frac{1-\gamma}{Re}\nabla\cdot(\sigma^L+\sigma^E),\\
&\nn\times\big(\hh-\gamma_1\NN-\gamma_2\DD\cdot\nn\big)=0,
\end{split}\right.
\end{eqnarray}
where the Leslie stress $\sigma^L$ is given by
\begin{eqnarray*}
\sigma^L=\alpha_1(\nn\nn:\DD)\nn\nn+\alpha_2\nn\NN+\alpha_3\NN\nn+\alpha_4\DD
+\alpha_5\nn\nn\cdot\DD+\alpha_6\DD\cdot\nn\nn, \end{eqnarray*}
with the Leslie coefficients taking the values
\begin{align}
&\alpha_1=-\frac{S_4}{2},\qquad
\alpha_2=-\frac{S_2}{2}(1+\frac{1}{\lambda}),\qquad
\alpha_3=-\frac{S_2}{2}(1-\frac{1}{\lambda}),\nonumber\\
&\alpha_4=\frac{4}{15}-\frac{5}{21}S_2-\frac{1}{35}S_4,\quad
\alpha_5=\frac{1}{7}S_4+\frac{6}{7}S_2,\quad
\alpha_6=\frac{1}{7}S_4-\frac{1}{7}S_2,
\end{align}
and
\begin{align}
\gamma_1=\frac{1}{\frac1{3S_2}+\frac2{3S_2^2}-\frac2{S_2^2\alpha}},\quad\gamma_2=-S_2,
\quad\lambda\triangleq-\frac{\gamma_2}{\gamma_1}=\frac13+\frac2{3S_2}-\frac{2}{S_2\alpha}.
\end{align}
The Ericksen stress $\sigma^E$ is given by
\begin{eqnarray*}
\sigma^E=-\frac{\partial{E_F}}{\partial(\nabla\nn)}\cdot(\nabla\nn)^T,
\end{eqnarray*}
with $E_F=\frac12\alpha GS_2^2|\nabla\nn|^2.$

First of all, for any $\nn\nn^{\bot}+\nn^{\bot}\nn\in \mathrm{Ker}~\mathcal{L}_{\nn}$ where $\nn\cdot\nn^\bot=0$, we have
\begin{align}\label{eq:4.7}
&\Big\langle\frac{\pa{{Q_0}}}{\pa{t}}+\vv_0\cdot\nabla{Q_0}-
2G\alpha\CJ_\nn(\Delta Q_0)-\big(\CM_{Q_0}(\kappa^T)+
\CM_{Q_0}^T(\kappa^T)\big), \nn\nn^{\bot}+\nn^{\bot}\nn\Big\rangle=0.
\end{align}
By direct computations, we obtain
\begin{align*}
&\Big\langle\frac{\pa{{Q_0}}}{\pa{t}}+\vv_0\cdot\nabla{Q_0}, \nn\nn^{\bot}+\nn^{\bot}\nn\Big\rangle
=2S_2\big(\frac{\partial\nn}{\partial{t}}+\vv_0\cdot\nabla\nn\big)\cdot\nn^{\bot},\\
&\big\langle\Delta{Q_0}, \nn\nn^{\bot}+\nn^{\bot}\nn\big\rangle=2S_2\nn^\bot\cdot\Delta\nn,\\
&\big\langle(\Delta{Q_0}\cdot{Q_0}+Q_0\cdot\Delta{Q_0}), \nn\nn^{\bot}+\nn^{\bot}\nn\big\rangle
=\frac{2}{3}S_2^2\Delta\nn\cdot\nn^{\bot},\\
&\big\langle\Delta{Q_0}:M_{Q_0}^{(4)}, \nn\nn^{\bot}+\nn^{\bot}\nn\big\rangle
=\frac{1}{\alpha}\big\langle\Delta{Q_0},\quad(\nn\nn^{\bot}+\nn^{\bot}\nn)\big\rangle
=\frac{2S_2}{\alpha}\nn^\bot\cdot\Delta\nn,\\
&\Big\langle\big(\kappa_0\cdot{Q_0}+Q_0\cdot\kappa_0^T+\frac{2}{3}\DD_0
-2\kappa_0:M_{Q_0}^{(4)}\big), \nn\nn^{\bot}+\nn^{\bot}\nn\Big\rangle\nonumber\\
&=\Big\langle\big((\DD_0-\BOm_0)\cdot{Q_0}+Q_0\cdot(\DD_0+\BOm_0)+\frac{2}{3}\DD_0
-2\DD_0:M_{Q_0}^{(4)}\big), \nn\nn^{\bot}+\nn^{\bot}\nn\Big\rangle\nonumber\\
&=-2S_2\nn^\bot\cdot(\BOm_0\cdot\nn)+\frac23S_2\nn^\bot\cdot(\DD_0\cdot\nn)
+\frac43\nn^\bot\cdot(\DD_0\cdot\nn)-\frac4\alpha\nn^\bot\cdot(\DD_0\cdot\nn),
\end{align*}
which along with (\ref{eq:4.7}) gives
\begin{align*}
2S_2\big(\frac{\partial\nn}{\partial{t}}+\vv_0\cdot\nabla\nn\big)\cdot\nn^{\bot}
-\frac{4}{3}\alpha GS_2\nn^\bot\cdot\Delta\nn
-\frac{2}{3}\alpha GS_2^2\Delta\nn\cdot\nn^{\bot}
+4GS_2\nn^\bot\cdot\Delta\nn\\
+2S_2\nn^\bot\cdot(\BOm_0\cdot\nn)-\frac23S_2\nn^\bot\cdot(\DD_0\cdot\nn)
-\frac43\nn^\bot\cdot(\DD_0\cdot\nn)+\frac4\alpha\nn^\bot\cdot(\DD_0\cdot\nn)=0,
\end{align*}
or
\begin{align*}
\Big(\frac{\partial\nn}{\partial{t}}+\vv_0\cdot\nabla\nn+\BOm_0\cdot\nn\Big)\cdot\nn^{\bot}
+\alpha GS_2\Big(-\frac{2}{3S_2}
-\frac{1}{3}
+\frac{2}{\alpha S_2}\Big)\nn^\bot\cdot\Delta\nn\\
-\Big(\frac13+\frac2{3S_2}-\frac2{S_2\alpha}\Big)\nn^\bot\cdot(\DD_0\cdot\nn)=0.
\end{align*}
Thus we derive the equation of $\nn$:
\begin{align}
\nn\times\Big(\hh-\gamma_1\Big(\frac{\partial\nn}{\partial{t}}+\vv_0\cdot\nabla\nn+\BOm_0\cdot\nn\Big)-\gamma_2\DD_0\cdot\nn\Big)=0.
\end{align}

In order to derive the equation of $\vv_0$, let us compute that
\begin{align*}
&\frac{1}{2}\DD_0:M_{Q_0}^{(4)}-\CL_\nn(B_1)-{G\alpha}\CM_{Q_0}(\Delta{Q_0})\\
&=\frac{1}{2}\DD_0:M_{Q_0}^{(4)}-\CL_\nn(B_1)-\alpha\Big(\frac{1}{3}G\Delta{Q_0}+G{Q_0}\cdot\Delta{Q_0}
-G\Delta{Q_0}:M_{Q_0}^{(4)}\Big)\\
&=\frac{1}{2}\DD_0:M_{Q_0}^{(4)}-\CL_\nn(B_1)-\alpha\Big(\frac{1}{3}G\Delta{Q_0}+\frac G2\big({Q_0}\cdot\Delta{Q_0}
+\Delta{Q_0}\cdot{Q_0}\big)-G\Delta{Q_0}:M_{Q_0}^{(4)}\Big)\\
&\qquad-\alpha \frac{G}{2}\big({Q_0}\cdot\Delta{Q_0}
-\Delta{Q_0}\cdot{Q_0}\big)\\
&=\frac{1}{2}\DD_0:M_{Q_0}^{(4)}-\frac12\Big(\frac{\pa{{Q_0}}}{\pa{t}}+\vv_0\cdot\nabla{Q_0}
-\big(\kappa_0\cdot{Q_0}+Q_0\cdot\kappa_0^T+\frac{2}{3}\DD_0
-2\kappa_0:M_{Q_0}^{(4)}\big)\Big)\\
&\qquad-\alpha\frac G2\big({Q_0}\cdot\Delta{Q_0}
-\Delta{Q_0}\cdot{Q_0}\big)\\
&=-\frac12\Big(S_2(\NN\nn+\nn\NN)-S_2(\DD_0\cdot\nn\nn+\nn\DD_0\cdot\nn)+\frac{2(S_2-1)}{3}\DD_0\\
&\quad+\big(S_4(\nn\nn:\DD)\nn\nn+\frac{S_2-S_4}{7}\big((\nn\nn:\DD)\II
+2(\nn\DD\cdot\nn+\DD\cdot\nn\nn)\big)\nonumber\\
&\quad+\big(\frac{2S_4}{35}-\frac{4S_2}{21}
+\frac{2}{15}\big)\DD\big)\Big)-\frac G 2\alpha S_2^2(\nn\Delta\nn-\Delta\nn\nn)\\
&=-\frac{S_4}{2}(\DD:\nn\nn)\nn\nn-\frac{S_2-S_4}{14}
(\DD:\nn\nn)\II-\frac{S_2}{2}(1+\frac{1}{\lambda})\nn\NN
-\frac{S_2}{2}(1-\frac{1}{\lambda})\NN\nn\nonumber\\
&\quad\quad+\big(\frac{4}{15}-\frac{5}{21}S_2-\frac{1}{35}S_4\big)\DD
+\big(\frac{1}{7}S_4+\frac{6}{7}S_2\big)\nn\nn\cdot\DD
+\big(\frac{1}{7}S_4-\frac{1}{7}S_2\big)\DD\cdot\nn\nn\\
&\equiv -\frac{S_2-S_4}{14}(\DD:\nn\nn)\II+\sigma^L,
\end{align*}
where $\sigma^L$ is the Leslie stress, and
\begin{align*}
\frac12\alpha G{Q_0}:\nabla\Delta{Q_0}=&\frac12\nabla(\alpha GQ_0:\Delta{Q_0})-\frac12\alpha GS_2^2\partial_l(n_in_j)\partial_k^2(n_in_j)\\
=&\frac12\nabla(\alpha GQ_0:\Delta{Q_0})-\alpha GS_2^2\partial_ln_i\partial_k^2n_i\\
=&\frac12\nabla(\alpha GQ_0:\Delta{Q_0})+GS_2^2\partial_l\partial_kn_i\partial_kn_i
-\alpha GS_2^2\partial_k\Big(\partial_kn_i\partial_ln_i\Big)\\
=&\frac12\nabla\Big(\alpha GQ_0:\Delta{Q_0}+\alpha GS_2^2(\nabla\nn)^2\Big)
-\alpha GS_2^2\nabla\cdot(\nabla\nn\odot\nabla\nn)\\
\equiv&-\na p_1+\na\cdot \sigma^E,
\end{align*}
where $\sigma^E=-\alpha GS_2^2(\nabla\nn\odot\nabla\nn)$ is the Ericksen stress. We denote
\beno
\sigma=\sigma^L+\sigma^E,\quad p=p_0+p_1+\frac{S_2-S_4}{14}(\DD:\nn\nn).
\eeno
Thus we derive the equation of $\vv_0$:
\ben
\frac{\pa{\vv_0}}{\pa{t}}+\vv_0\cdot\nabla\vv_0=-\na p+\frac{\gamma}{Re}\Delta\vv_0+\na\cdot\sigma.
\een
This completes the derivation of the Ericksen-Leslie equation. Note that the constraints
(\ref{Leslie relation}) and (\ref{Leslie-coeff}) are naturally satisfied.
\begin{remark}
Comparing with the Ericksen-Leslie system directly derived from the Doi-Onsager theory,
the only difference is that the value of $\gamma_1$ (and hence $\lambda, \alpha_2, \alpha_3$)
has been changed. This change comes from the Bingham closure, which approximates a general
probability distribution function by a specific distribution function.
\end{remark}

\section{The hard-core potential case}
In \cite{HLWZ}, the general Oseen-Frank energy is derived from the Onsager theory with
hard-core potential. Here we present a sketch of derivation of the Ericksen-Leslie equation
with general Oseen-Frank energy
\begin{align}
E_{OF}(\textbf{n}, \nabla \textbf{n}) = &\frac{1}{2}
K_1 (\mathrm{div} \textbf{n})^2 + \frac{1}{2}K_2 (\textbf{n}\cdot \nabla
\times \textbf{n})^2 + \frac{1}{2}K_3 (\textbf{n} \times \nabla \textbf{n})^2.\nonumber
\end{align}

We introduce the fourth order traceless tensor $Q_4=Q_4(Q)$ defined by
\begin{align}
Q_{4\alpha\beta\gamma\delta}=&\int_\BS\Big({m}_{\alpha}{m}_{\beta}{m}_{\gamma}{m}_{\mu}
-\frac{1}{7}\big[m_{\alpha}m_{\beta}\delta_{\gamma\mu}+
m_{\gamma}m_{\mu}\delta_{\alpha\beta}+m_{\alpha}m_{\gamma}\delta_{\beta\mu}
+m_{\beta}m_{\mu}\delta_{\alpha\gamma}\nonumber\\
&\qquad+m_{\alpha}m_{\mu}\delta_{\beta\gamma}+
m_{\beta}m_{\gamma}\delta_{\alpha\mu}\big]+\frac{1}{35}\big[\delta_{\alpha\beta}\delta_{\gamma\mu}
+\delta_{\alpha\gamma}\delta_{\beta\mu}+\delta_{\alpha\mu}\delta_{\beta\gamma}\big]\Big)f_Q\ud\mm,\nonumber
\end{align}
and the energy $E$ defined by
\begin{align}
E(Q,\nabla Q)=E_b(Q)+ E_e(Q,\nabla Q),\nonumber
\end{align}
where
\begin{align*}
&E_b(Q)=\int_{\BR}\Big(-\ln{Z}_Q+Q:B_Q-
\frac{\alpha}{2}|Q|^2\Big)\ud\xx,\\
&E_e(Q,\nabla Q)=\frac\ve2\int_\BR\bigg\{
J_1|\nabla Q|^2+J_2|\nabla Q_4|^2+J_3\Big(\partial_i(Q_{ik})\partial_j(Q_{jk})+\partial_i(Q_{jk})\partial_j(Q_{ik})\Big)\nonumber\\
&\qquad+J_4\Big(\partial_i(Q_{4iklm})\partial_j
(Q_{4jklm})+\partial_i(Q_{4jklm})\partial_j(Q_{4iklm})\Big)+J_5\partial_i(Q_{4ijkl})
\partial_j(Q_{kl})\bigg\}\ud\xx.\nonumber
\end{align*}
The coefficients $J_i(i=1,\cdots,5)$ are explicitly calculated in terms of the molecular parameters in \cite{HLWZ}.
Here we set them to be general.
Let
\begin{align}
\mu_Q=\frac{\delta E(Q,\nabla Q)}{\delta Q}=B_Q-\alpha Q+\frac{\delta E_e}{\delta Q}.\nonumber
\end{align}
In such case, the $Q$-tensor equation becomes
\begin{align}
\frac{\pa{{Q}}}{\pa{t}}+\vv\cdot\nabla{Q}&=\frac{\ve}{De}\mathcal{N}(\mu_Q)
+\CM_Q(\kappa^T)+\CM_Q^T(\kappa^T)+\frac{2}{De}\Big(\CM_Q(\mu_Q)
+\CM_Q^T(\mu_Q)\Big),\label{eq:Q-general}\\\nonumber
\frac{\pa{\vv}}{\pa{t}}+\vv\cdot\nabla\vv&=-\nabla{p}
+\frac{\gamma}{Re}\Delta\vv+\frac{1-\gamma}{2Re}\nabla\cdot(\DD:M_Q^{(4)})\\
&\qquad\quad+\frac{1-\gamma}{DeRe}\Big(2\nabla\cdot\CM_Q(\mu_Q)+\mu_Q:\nabla{Q}\Big).\label{eq:v-general}
\end{align}
Then we can derive the Ericksen-Leslie equation from (\ref{eq:Q-general}) and (\ref{eq:v-general}) by taking small
Debourah number limit, where the Leslie coefficients and $\gamma_1$, $\gamma_2$  keep the same, but the Ericksen energy $E_F$ is replaced by the general
Oseen-Frank energy $E_{OF}$  with the elastic coefficients $K_1, K_2, K_3$ given by
\begin{align*}
&K_1=2S_2^2(J_1+J_3)+S_4^2(\frac{16}{7}J_2+\frac{92}{49}J_4)-\frac67J_5S_2S_4,\\
&K_2=2S_2^2J_1+S_4^2(\frac{16}{7}J_2+\frac{12}{49}J_4)-\frac27J_5S_2S_4,\\
&K_3=2S_2^2(J_1+J_3)+S_4^2(\frac{16}{7}J_2+\frac{120}{49}J_4)+\frac87J_5S_2S_4,
\end{align*}
where $S_2,~S_4$ are defined by (\ref{order-parameter}). We omit the detailed derivation here.

Let us conclude this section by some comparisons with two dynamical Q-tensor models mentioned in the introduction (see also Remarks in section 2.3).
Our dynamical $Q$-tensor theory could written in the following form similar to
(\ref{eq:Q-general-intro}):
\begin{align*}
\frac{\pa{{Q}}}{\pa{t}}+\vv\cdot\nabla{Q}&=D^{trans}(\mu_Q)+D^{rot}(\mu_Q)+F(Q,\DD)
+\BOm\cdot\mu_Q-\mu_Q\cdot\BOm,\\
\frac{\pa{\vv}}{\pa{t}}+\vv\cdot\nabla\vv&=-\nabla{p}+\nabla\cdot\big(\sigma^{dis}+\sigma^{s}+\sigma^a+\sigma^{d}\big),\\
\nabla\cdot\vv&=0.
\end{align*}
Here the additional term
$$D^{trans}=\frac{\ve}{De}\mathcal{N}(\mu_Q)$$ accounts for the translational diffusion, which is not considered in Beris-Edwards's and Qian-Sheng's models.
The terms $\sigma^a$ and $\sigma^d$, module some constants, are all the same as those in Beris-Edwards's and Qian-Sheng's models:
\begin{align}
\sigma_{ij}^d=\frac{\partial E}{\partial (Q_{kl,j})}Q_{kl,i},\quad
\sigma^a=Q\cdot\mu_Q-\mu_Q\cdot Q.\nonumber
\end{align}
Here it should be noticed that $\mu_Q:\nabla{Q}$ is actually the same as
 $\partial_j\big(\frac{\partial E}{\partial (Q_{kl,j})}Q_{kl,i}\big)$ module a pressure term, and
$$(\CM-\CM^T)(\mu_Q)=Q\cdot\mu_Q-\mu_Q\cdot Q.$$
Our rotational diffusion term is derived from Doi's kinetic theory, which
takes the form:
$$D^{rot}=\frac{2}{De}(\CM_Q+\CM_Q^T)(\mu_Q).$$
The two conjugated terms $F(Q,\DD)$ and $\sigma^s=-\frac{1-\gamma}{Re De}F(Q,\mu_Q)$ are given by
$$F(Q,A)=(\CM_Q+\CM_Q^T)(A).$$
The additional dissipation stress is given by
$$\sigma^{dis}=\frac{2\gamma}{Re}\DD+\frac{1-\gamma}{2Re}\DD:M_Q^{(4)}.$$


%

\bigskip

\noindent {\bf Acknowledgments.}
P. Zhang is partly supported by NSF of China under Grant 50930003 and 21274005.
Z. Zhang is partially supported by NSF of China under Grant 10990013 and 11071007,
Program for New Century Excellent Talents in University and Fok Ying Tung Education Foundation.


\begin{thebibliography}{50}

\bibitem{BM} J. M. Ball and A. Majumdar, {\it Nematic liquid crystals:
from Maier-Saupe to a continuum theory}, Mol. Cryst. Liq. Cryst., 525(2010), 1-11.

\bibitem{BE} A. N. Beris and B. J. Edwards,{\it  Thermodynamics of Flowing Systems with Internal Microstructure},
Oxford Engrg. Sci. Ser. 36, Oxford University Press, Oxford, New York, 1994.

\bibitem{CL} C. V. Chaubal and L. G. Leal, {\it A closure approximation for liquid-crystalline polymermodels
based on parametric density estimation}, Journal of Rheology, 42(1998), 177-201.

\bibitem{CT} J. S. Cintra and C. L. Tucker, {\it Orthotropic closure approximations for flow-induced
fiber orientation}, Journal of Rheology, 39(1995), 1095.

\bibitem{DG} P. G. De Gennes, {\it The physics of liquid crystals}, Clarendon Press, Oxford, 1974.

\bibitem{Doi} M. Doi and S. F. Edwards, {\it The theory of polymer dynamics}, Oxford University Press,
Oxford, UK, 1986.

\bibitem{EZ} W. E and P. Zhang,
{\it A molecular kinetic theory of inhomogeneous liquid crystal flow
and the small Deborah number limit}, Methods and Applications of
Analysis, {13}(2006), 181-198.

\bibitem{Eri61} J. Ericksen, {\it Conservation laws for liquid crystals},
Trans. Soc. Rheol. , {5}(1961), 22-34.

\bibitem{Eri} J. Ericksen, {\it Liquid crystals with variable degree of orientation},
Arch. Rat. Mech. Anal., 113(1991), 97-120.

\bibitem{FS} I. Fatkullin and V. Slastikov, {\it Critical points of the Onsager functional on a sphere},
Nonlinearity, 18(2005), 2565-80.

\bibitem{FLS} J. J. Feng, G. L. Leal and G. Sgalari, {\it A theory for flowing nematic polymers with
orientational distortion}, Journal of Rheology, 44(2000), 1085-1101.

\bibitem{Feng} J. Feng, C. V. Chaubal and L. G. Leal,{\it  Closure approximations for the Doi theory: Which
to use in simulating complex flows of liquid-crystalline polymers?},  Journal of Rheology,
42(1998), 1095-1109.

\bibitem{HLWZ}J. Han, Y. Luo, W. Wang and P. Zhang, {\it From microscopic theory to
macroscopic theory: systematic study on static modeling for liquid crystals},
    Preprint.

\bibitem{HL} E. J. Hinch and L. G. Leal, {\it Constitutive equations in suspension mechanics. Part II. Approximation
forms for a suspension of rigid particles affected by Brownian rotations}, J. Fluid
Mech., 76(1976), 187-208.


\bibitem{KD} N. Kuzuu and M. Doi,
{\it Constitutive equation for nematic liquid crystals under weak
velocity gradient derived from a molecular kinetic equation,}
Journal of the Physical Society of Japan, 52(1983), 3486-3494.

\bibitem{Les} F. M. Leslie, {\it Some constitutive equations for liquid crystals}, Arch. Rat. Mech. Anal., 28
(1968), 265-283.

\bibitem{LL} F.-H. Lin and C. Liu, {\it Existence of solutions for the Ericksen-Leslie system},
Arch. Ration. Mech. Anal., 154(2000), 135-156.

\bibitem{LZZ} H. Liu, H. Zhang and P. Zhang,
{\it Axial symmetry and classification of stationary solutions of
Doi-Onsager equation on the sphere with Maier-Saupe potential},
Comm. Math. Sci., 3(2005), 201-218.

\bibitem{MS} W. Maier and A. Saupe, {\it Eine einfache molekulare Theorie des nematischen kristallinflussigen Zustandes},
Z. Naturf. a,  13(1958), 564.

\bibitem{MZ} A. Majumdar and  A. Zarnescu, {\it Landau-De Gennes theory of nematic liquid
crystals: the Oseen-Frank limit and beyond},   Arch. Rat. Mech. Anal., 196(2010), 227-280.


\bibitem{MG} G. Marrucci and F. Greco, {\it The elastic constants of Maier-Saupe rodlike molecule nematics},
Mol. Cryst. Liq. Cryst., 206(1991), 17-30.

\bibitem{MN} N. J. Mottram and C. Newton, {\it Introduction to Q-tensor theory}. University of Strathclyde, Department of Mathematics, Research Report, 10(2004).

\bibitem{On} L. Onsager, {\it The effects of shape on the interaction of colloidal particles},
Ann. N. Y. Acad. Sci., 51(1949), 627-659.

\bibitem{Parodi} O. Parodi, {\it Stress tensor for a nematic liquid crystal,}
Journal de Physique, 31 (1970), 581-584.

\bibitem{QS} T. Qian and P. Sheng, {\it Generalized hydrodynamic equations for nematic liquid crystals}, Phys. Rev. E, 58 (1998), 7475-7485.


\bibitem{SMV} A. M. Sonnet, P. L. Maffettone and E. G. Virga, {\it Continuum theory for nematic liquid crystals with tensorial order},
J. Non-Newtonian Fluid Mech., 119(2004), 51-59.

\bibitem{Wang} Q. Wang, {\it A hydrodynamic theory for solutions of nonhomogeneous nematic liquid crystalline
polymers of different configurations}, Journal of Chemical Physics,
116(2002), 9120-9136.

\bibitem{WELZ} Q. Wang, W. E, C. Liu and P. Zhang, {\it Kinetic theories for flows of nonhomogeneous rodlike
liquid crystalline polymers with a nonlocal intermolecular
potential}, Phys. Review E, 65(2002), Art. No. 051504; Corrections:
71(4): Art. No. 049902 (2005).

\bibitem{WZZ} W. Wang, P. Zhang and Z. Zhang, {\it The small Deborah number limit of the Doi-Onsager equation  to the
Ericksen-Leslie equation}, arXiv:1206.5480.

\bibitem{ZZ-SIAM} H. Zhang and P. Zhang, {\it On the new multiscale rodlike model of polymeric fluids},
SIAM Journal on Mathematical Analysis, 40(2008), 1246-1271.


\bibitem{ZWFW} H. Zhou, H. Wang, M. G. Forest and Q. Wang, {\it A new proof on axisymmetric equilibria of a three-dimensional Smoluchowski equation}, Nonlinearity, 18(2005), 2815-2825.

\bibitem{YZ} H. Yu and P. Zhang, {\it A kinetic-hydrodynamic simulation of microstructure of liquid crystal polymers in plane shear flow},
J. Non-Newtonian Fluid Mech., 141(2007), 116-127.


\end{thebibliography}
\end{document}